\documentclass[11pt, a4paper]{amsart}
\usepackage{graphicx} 
\usepackage[margin=1in]{geometry}
\title[Arithmetic Kodaira--Spencer Class and Frobenius Liftings]{Arithmetic Kodaira--Spencer Class and Frobenius Liftings via Frobenius--Witt Cotangent Complex}
\author{Kanau Shimada}
\date{}

\address{Department of Mathematics, Institute of Science Tokyo, 2-12-1 Ookayama, Meguro, Tokyo 152-8551}
\email{shimada.k.aj@m.titech.ac.jp}

\usepackage{amsmath}
\usepackage{amssymb}
\usepackage{latexsym}
\usepackage{amsthm}
\usepackage{amsfonts}
\usepackage{mathtools}
\usepackage{stmaryrd}
\usepackage{tikz}
\usepackage{tikz-cd}

\theoremstyle{plain}
\newtheorem{theorem}{Theorem}[section]
\newtheorem{lemma}[theorem]{Lemma}
\newtheorem{prop}[theorem]{Proposition}
\newtheorem{coro}[theorem]{Corollary}

\newtheorem{conj}[theorem]{Conjecture}

\theoremstyle{definition}
\newtheorem{definition}[theorem]{Definition}
\newtheorem{remark}[theorem]{Remark}

\newtheorem{notation}[theorem]{Notation}

\usepackage{mathrsfs}
\usepackage{blindtext}
\usepackage{hyperref}

\subjclass[2020]{13D10, 13D03, 14G45, 13A35}

\begin{document}
\maketitle
\begin{abstract}
    For a flat $\mathbb{Z}_{(p)}$-scheme $X$, we introduce an obstruction class to the existence of a Frobenius lift on its reduction $X_1$ modulo $p^2$. This class is defined by replacing the cotangent complex in the classical construction of the Kodaira--Spencer class with its arithmetic analogue, the Frobenius--Witt cotangent complex. We further prove that this class coincides with the obstruction to Frobenius lifting defined by classical deformation theory. We also define its relative version and prove that this is an obstruction class to the existence of a Frobenius lift compatible with the given one on the base scheme. 
\end{abstract}
\tableofcontents
\section{Introduction}
The existence of a lifting of Frobenius to $\mathbb{Z}/p^2$, or more generally to the ring $W_2(k)$ of $2$-truncated Witt vectors of a perfect field $k$, imposes strong restrictions on the geometric structure of a smooth projective variety in positive characteristic. For example, the following is a special case of \cite[Theorem 3]{BTLM97}. 
\begin{theorem}[Bott vanishing]
    Let $X$ be a smooth projective variety over a perfect field $k$ of characteristic $p>0$. Assume $X$ has a flat lift $\widetilde X$ to $W_2(k)$ and $\widetilde X$ has a Frobenius lift. Let $L$ be an ample line bundle on $X$. Then\[H^n(X,\Omega_X^m\otimes _{\mathcal O_X}L)=0\] holds for all $n>0$ and $m\ge0$.
\end{theorem}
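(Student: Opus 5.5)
The plan is to follow the standard argument of Buch--Thomsen--Lauritzen--Mehta: use the Frobenius lift to split the Frobenius push-forward of the de Rham complex, and then run a Serre vanishing argument through iterated Frobenius.

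\textbf{Step 1: a Frobenius lift splits the de Rham complex.} Let $\widetilde F\colon \widetilde X\to\widetilde X$ be the Frobenius lift. Since $\widetilde F^*$ reduces to $F^*$ modulo $p$, and $F^*$ kills $1$-forms, the pullback $\widetilde F^*$ on $\Omega^1_{\widetilde X}$ is divisible by $p$. Hence $\varphi=\tfrac1p\widetilde F^*$ induces a well-defined $\mathcal O_X$-linear map
\[\varphi\colon \Omega^1_{X}\to F_*Z^1\Omega^\bullet_X .\]
Composed with the projection to $F_*\mathcal H^1$, this map is the inverse Cartier operator. Taking wedge powers gives maps $\Omega^m_X\to F_*Z^m$ that lift $C^{-1}$ for every $m$.

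\textbf{Step 2: the splittings $\Omega^m_X$ into $F_*\Omega^m_X$.} Composing the maps of Step 1 with the inclusion $Z^m\subset\Omega^m$ gives $\Omega^m_X\to F_*\Omega^m_X$. I expect this to be the main obstacle. I will show that these maps are split injections of $\mathcal O_X$-modules, so that $\Omega^m_X$ is a direct summand of $F_*\Omega^m_X$. The natural candidate for the retraction is $F_*\Omega^m_X\to F_*(\Omega^m_X/B^m)\supset F_*Z^m/B^m\cong\Omega^m_X$ via the Cartier operator. However, $Z^m$ need not be a summand of $\Omega^m$, so this alone does not produce a retraction.

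What I would try first is the following. Using the splitting $F_*\Omega^\bullet_X\simeq\bigoplus\Omega^i_X[-i]$ from Step 1 (the Deligne--Illusie quasi-isomorphism, which here is realized by an honest map of complexes), I would set up a local computation in coordinates. Choose coordinates with $\widetilde F^*t_i=t_i^p+p\,g_i$, so that $\varphi(dt_i)=t_i^{p-1}dt_i+dg_i$. Then I would show that the $\mathcal O_X$-linear projection onto the $t^{p-1}$-isotypic component of $F_*\Omega^m_X$, twisted appropriately, retracts $\varphi$. As a fallback I would use the argument of \cite{BTLM97}, which passes through the splitting of $F_*$ on the ring level together with the Cartier isomorphism.

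\textbf{Step 3: vanishing by iterating.} Given the splitting, we have
\[F^*L\cong L^{p}\quad\text{and}\quad F_*\Omega^m_X\otimes L\cong F_*(\Omega^m_X\otimes F^*L)=F_*(\Omega^m_X\otimes L^{p})\]
by the projection formula. Since $F$ is affine, this gives an injection
\[H^n(X,\Omega^m_X\otimes L)\hookrightarrow H^n(X,\Omega^m_X\otimes L^{p}).\]
Iterating the injection, we land in $H^n(X,\Omega^m_X\otimes L^{p^e})$. For $e\gg0$ and $n>0$ this group vanishes by Serre vanishing, since $L$ is ample and $X$ is projective. This gives the claimed vanishing $H^n(X,\Omega^m_X\otimes L)=0$ for all $n>0$ and $m\ge0$.
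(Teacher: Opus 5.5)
The paper does not prove this statement; it only quotes it from \cite{BTLM97} as motivation, so your argument has to stand on its own. Your Steps 1 and 3 are the standard ones and are fine. However, Step 3 rests entirely on a \emph{global} $\mathcal O_X$-linear retraction of $\varphi_m\colon\Omega^m_X\to F_*\Omega^m_X$, and Step 2 never produces one. You flag it as the main obstacle and correctly note that the Cartier operator on $F_*Z^m$ does not give a retraction by itself. After that you offer only a coordinate sketch and a fallback to \cite{BTLM97}, which is circular.

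The coordinate idea does work on a single chart. In \'etale coordinates, let $\pi$ be the projection of $F_*\Omega^m_U$ onto the basis vectors $t_J^{p-1}dt_J$ among the $t^a\,dt_J$ ($0\le a_i\le p-1$). Then $\pi$ kills exact forms, and $\varphi_m(dt_J)=t_J^{p-1}dt_J+(\text{exact})$, so $\pi\circ\varphi_m=\mathrm{id}$ on $U$. But $\pi$ depends on the coordinates; for $m=0$ it is just the coordinate Frobenius splitting, and such splittings do not glue in general. A map that is only locally split injective need not be injective on $H^n$, since the obstruction to a global splitting is a class in $\operatorname{Ext}^1$ of the cokernel. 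So the proof has a hole exactly where the Frobenius lift must be used globally.

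The missing idea is to build the retraction canonically from the Cartier trace and duality. Let $d=\dim X$, let $\varphi_\bullet\colon\Omega^\bullet_X\to F_*\Omega^\bullet_X$ be the multiplicative extension of $\varphi$, and let $C\colon F_*\omega_X\to\omega_X$ be the Cartier operator on top forms (all of which are closed). For a local section $\eta$ of $F_*\Omega^m_X$, the map $\beta\mapsto C(\eta\wedge\varphi_{d-m}(\beta))$ from $\Omega^{d-m}_X$ to $\omega_X$ is $\mathcal O_X$-linear, because $\varphi_{d-m}(f\beta)=f^p\varphi_{d-m}(\beta)$ and $C(f^p\theta)=fC(\theta)$. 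Since the wedge pairing $\Omega^m_X\otimes\Omega^{d-m}_X\to\omega_X$ is perfect, there is a unique $\rho_m(\eta)\in\Omega^m_X$ with $\rho_m(\eta)\wedge\beta=C(\eta\wedge\varphi_{d-m}(\beta))$ for all $\beta$. The resulting map $\rho_m\colon F_*\Omega^m_X\to\Omega^m_X$ is $\mathcal O_X$-linear. Then
\[\rho_m(\varphi_m(\alpha))\wedge\beta=C\bigl(\varphi_m(\alpha)\wedge\varphi_{d-m}(\beta)\bigr)=C\bigl(\varphi_d(\alpha\wedge\beta)\bigr)=\alpha\wedge\beta.\]
The last equality holds because, by your Step 1, $\varphi_d$ lifts $C^{-1}$, and $Z^d=\Omega^d$, so $C\circ\varphi_d=\mathrm{id}_{\omega_X}$. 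Hence $\rho_m\circ\varphi_m=\mathrm{id}$ globally; for $m=0$ this recovers the Frobenius splitting induced by the lift. With this in place of Step 2, your Step 3 goes through unchanged.
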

Moreover, the following conjecture is proposed in \cite[Conjecture 1]{AWZ21}. 
\begin{conj}
    Let $X$ be a smooth projective variety over an algebraically closed field $k$ of characteristic $p>0$. Assume $X$ has a flat lift $\widetilde X$ to $W_2(k)$ and $\widetilde X$ has a Frobenius lift. Then there is a finite \'etale Galois covering $\pi:Y\to X$ such that the Albanese morphism $Y\to \operatorname{Alb}(Y)$ is a toric fibration. 
\end{conj}

In view of the above, it is an important problem to determine when a flat $W_2(k)$-scheme admits a Frobenius lift. For example, \cite{Zda18} includes a study of an obstruction class to the existence of a Frobenius lifting. There, the obstruction class is constructed using the deformation theory of \cite{Ill71} and \cite{Ill72}. On the other hand, for a smooth $W_2(k)$-scheme $X_1$, \cite{DKRZB17} constructs a short exact sequence \[ 0\to \mathcal O_{X_0}\to \Omega_{X_1}^{1,\mathrm{tot}}\to F^*_{X_0}\Omega_{X_0}\to 0\] using the module of total $p$-differentials $\Omega_{X_1}^{1,\mathrm{tot}}$, an arithmetic analogue of the module of K\"ahler differentials, and introduces an obstruction class to Frobenius lifting as the corresponding element of the above exact sequence in the $\operatorname{Ext}^1$-group. It is then shown that this obstruction class agrees, up to sign, with the class appearing in \cite{DI87}. Since the construction of this class is parallel to that of the classical Kodaira--Spencer class, it is called the arithmetic Kodaira--Spencer class. 

To extend this construction to the non-smooth case, it is necessary to extend the short exact sequence above to this setting. However, as in the case of the classical module of K\"ahler differentials, the resulting sequence need not be left exact. Therefore, in this paper, we extend the definition of the arithmetic Kodaira--Spencer class by replacing the module of total $p$-differentials $\Omega_{X_1}^{1,\mathrm{tot}}$ in the construction above with the Frobenius--Witt cotangent complex introduced in \cite{Shi26}: 
\begin{definition}
    Let $X$ be a flat $\mathbb Z_{(p)}$-scheme. Write $X_n:=X\times_{\operatorname{Spec}\mathbb Z_{(p)}}\operatorname{Spec}\mathbb Z/p^{n+1}$ for $n\ge0$. Consider the fiber sequence
    \[\mathcal O_{X_0}\to F\mathbb L_{X}\to F^*_{X_0}\mathbb L_{X_0/\mathbb F_p}\to.\] We define the arithmetic Kodaira--Spencer class $\kappa_X\in \operatorname{Ext}^1_{X_0}(F_{X_0}^*\mathbb L_{X_0/\mathbb F_p},\mathcal O_{X_0})$ by the class determined by the connecting map\[F_{X_0}^*\mathbb L_{X_0/\mathbb F_p}\to \mathcal O_{X_0}[1]\] of the above fiber sequence.
\end{definition}
We then prove that this class is equal to the obstruction class defined using the classical deformation theory considered in \cite{Zda18}. In the affine case (or more generally, animated $\mathbb Z_{(p)}$-algebras), we prove that such a coincidence holds for not just as classes in the $\operatorname{Ext}$-group but also for maps in the derived $\infty$-category (for a precise statement, see Theorem \ref{thm5.4}). As an application, we deduce that the Frobenius--Witt cotangent complex of a flat $\mathbb{Z}_{(p)}$-algebra essentially depends only on its reduction modulo $p^2$ (Corollary \ref{cor5.8}).

We also define the relative version of the arithmetic Kodaira--Spencer class:
\begin{definition}
    Let $f:X\to Y$ be a morphism of flat $\mathbb Z_{(p)}$-schemes. Let $f_n:X_n\to Y_n$ denote the pullback of $f$ along the canonical morphism $\operatorname{Spec}\mathbb Z/p^{n+1}\to \operatorname{Spec}\mathbb Z$ for non-negative integers $n\ge0$. The relative arithmetic Kodaira--Spencer class $\kappa_{X/Y}\in \operatorname{Ext}_{X_0}^1(F_{X_0}^*\mathbb L_{X_0/Y_0},f_0^*F\mathbb L_Y)$ is the element defined as the connecting map of the fiber sequence\[f_0^*F\mathbb L_Y\to F\mathbb L_X\to F_{X_0}^*\mathbb L_{X_0/Y_0}\to. \] 
\end{definition}
Furthermore, we prove that this class defines an obstruction class to the existence of a Frobenius lift compatible with the given one on the base scheme:
\begin{theorem}[Theorem \ref{thm3.9}]
    Let $f:X\to Y$ be a morphism of flat $\mathbb Z_{(p)}$-schemes. Let $f_n:X_n\to Y_n$ denote the pullback of $f$ along the canonical morphism $\operatorname{Spec}\mathbb Z/p^{n+1}\to \operatorname{Spec}\mathbb Z$ for $n\ge0$. Assume that there is a Frobenius lift on $Y_1$ with corresponding retraction $r:F\mathbb L_Y\to \mathcal O_{Y_0}$ (Remark \ref{remA.1}). Then the following assertions are equivalent.
    \begin{enumerate}
        \item The composition\[F_{X_0}^*\mathbb L_{X_0/Y_0}\xrightarrow[]{\kappa_{X/Y}}f_0^*F\mathbb L_Y[1]\xrightarrow[]{f_0^*r}f_0^*\mathcal O_{Y_0}[1]\simeq \mathcal O_{X_0}[1]\] is zero,
        \item there is a Frobenius lift on $X_1$ which is compatible with the given one on $Y_1$.
    \end{enumerate}
\end{theorem}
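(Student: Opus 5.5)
The plan is to reduce everything to the universal property of the Frobenius--Witt cotangent complex from \cite{Shi26}, as summarized in Remark \ref{remA.1}. For a flat $\mathbb Z_{(p)}$-scheme $X$, Frobenius lifts on $X_1$ correspond to retractions $F\mathbb L_X\to\mathcal O_{X_0}$ of the canonical map $\mathcal O_{X_0}\to F\mathbb L_X$. I expect this correspondence to be functorial in $X$: given $f$ with Frobenius lifts $\phi_X$ on $X_1$ and $\phi_Y$ on $Y_1$, we should have $f_1\circ\phi_X=\phi_Y\circ f_1$ if and only if the retraction $r_X$ restricts to $f_0^*r$ along $f_0^*F\mathbb L_Y\to F\mathbb L_X$. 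So the first step is to prove the following reformulation of (2):
\[
\text{(2)}\iff \text{there is } s:F\mathbb L_X\to\mathcal O_{X_0} \text{ with } s\circ(f_0^*F\mathbb L_Y\to F\mathbb L_X)=f_0^*r.
\]
The condition that $s$ also retracts $\mathcal O_{X_0}\to F\mathbb L_X$ comes for free. That map factors as $\mathcal O_{X_0}\simeq f_0^*\mathcal O_{Y_0}\to f_0^*F\mathbb L_Y\to F\mathbb L_X$, and $f_0^*r$ already retracts the first arrow. For this step I would check the naturality of the bijection in Remark \ref{remA.1} with respect to $f$. If the bijection is built from the universal property of $F\mathbb L$ via maps to square-zero extensions, naturality should be formal. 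If it is built concretely, through the description of $\pi_0$ as total $p$-differentials, I would check compatibility with $f$ locally on affines, where $F\mathbb L$ is computed by simplicial resolutions.

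The second step is pure homological algebra with the fiber sequence
\[
f_0^*F\mathbb L_Y\xrightarrow{\ \iota\ } F\mathbb L_X\to F_{X_0}^*\mathbb L_{X_0/Y_0}\xrightarrow{\ \kappa_{X/Y}\ } f_0^*F\mathbb L_Y[1].
\]
Apply $\operatorname{Hom}_{D(X_0)}(-,\mathcal O_{X_0})$ to get the exact sequence
\[
\operatorname{Hom}(F\mathbb L_X,\mathcal O_{X_0})\xrightarrow{\iota^*}\operatorname{Hom}(f_0^*F\mathbb L_Y,\mathcal O_{X_0})\xrightarrow{\kappa_{X/Y}^*}\operatorname{Ext}^1(F_{X_0}^*\mathbb L_{X_0/Y_0},\mathcal O_{X_0}).
\]
Hence $f_0^*r$ lifts along $\iota$ if and only if $f_0^*r[1]\circ\kappa_{X/Y}=0$, which is exactly condition (1). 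Combining this with the first step gives (1)$\iff$(2).

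The remaining issue is globalization, and I expect this together with the naturality check to be the main obstacle. On a non-affine $X$, $\operatorname{Hom}$ in $D(X_0)$ is homotopy classes of maps, and the statement in Remark \ref{remA.1} must be at that level. If the remark instead identifies Frobenius lifts with retractions only up to a groupoid or space of choices, I would argue as follows. Frobenius lifts compatible with $\phi_Y$ form a sheaf of torsors under $\mathcal{H}om(F_{X_0}^*\mathbb L_{X_0/Y_0},\mathcal O_{X_0})$, since the difference of two lifts is a derivation. The existence of a single lift only requires $\pi_0$ of the mapping space to be nonempty, so working in the homotopy category suffices. Once naturality is established, the exact-triangle argument of the second step then applies globally, because $F\mathbb L_X$ and the fiber sequence are defined as global objects of $D(X_0)$.
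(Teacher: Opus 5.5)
Your proposal is correct and is essentially the paper's proof. You reformulate (2) as the existence of $q:F\mathbb L_X\to\mathcal O_{X_0}$ with $q\circ\iota=f_0^*r$, where the retraction property is automatic via the factorization $\mathcal O_{X_0}\simeq f_0^*\mathcal O_{Y_0}\to f_0^*F\mathbb L_Y\to F\mathbb L_X$, and then the cofiber sequence shows such a $q$ exists if and only if $f_0^*r\circ\kappa_{X/Y}=0$. The paper uses the naturality of the correspondence in Remark \ref{remA.1} without comment. Your globalization worry is harmless, since $\operatorname{Map}(F\mathbb L_X,\mathcal O_{X_0})$ is discrete (connective source, discrete target), so no torsor argument is needed.
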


Finally, we mention a related work. During the final stages of this research, a preprint \cite{Mao26} by Zhouhang Mao was posted, which provides an extension of the definition of Frobenius--Witt cotangent complex to derived rings. The definition in that preprint is based on a different perspective from \cite{Shi26}. Although we are interested in whether the same results as in the present paper hold for the case of derived rings, and whether an alternative proof can be given using the  perspective of \cite{Mao26}, we do not address these questions here.

\vspace{2mm}
\textbf{Notation}.
We denote by $\operatorname{Ring}$ the category of (non-derived) rings. For a ring $A\in \operatorname{Ring}$, we denote by $\operatorname{Mod}(A)$ the category of (non-derived) $A$-modules. For a category $\mathcal C$ having a set of compact projective generators, we denote by $\operatorname{Ani}(\mathcal C)$ the animation of the category $\mathcal C$ (as defined in \cite[Section 5.1]{CS24}). For $A\in \operatorname{Ani(Ring})$, we denote by $\mathcal D(A)$ the derived $\infty$-category of $A$ and $\mathcal D_{\ge 0}(A)$ its connective part.

\vspace{2mm}
\textbf{Acknowledgments}. The author would like to express his sincere gratitude to Shou Yoshikawa. Corollary \ref{cor5.8} arose from a question posed by him, and the implication from (2) to (1) in Theorem \ref{thm3.2} was suggested by him during a fruitful discussion. The author is also grateful to Rirai Ikeda, Ryo Ishizuka and Ryoma Takeuchi for reading an earlier draft of this paper and providing valuable comments. Finally, the author would like to thank his supervisor, Yuichiro Taguchi, for his constant encouragement and guidance.

\section{Frobenius--Witt cotangent complexes for schemes}
The notion of a Frobenius--Witt cotangent complex was defined in \cite{Shi26} for animated $\mathbb Z_{(p)}$-algebras. In this section, we introduce the Frobenius–Witt cotangent complex for flat $\mathbb Z_{(p)}$-schemes and show that some of the basic properties proved in \cite{Shi26} also hold in this case. 

\begin{notation}
    For a scheme $X$, we define the $\infty$-category of quasi-coherent sheaves on $X$ by the following limit of $\infty$-categories: \[\mathcal D_{qc}(X):=\operatorname{lim}_{U\subseteq X}\mathcal D(\mathcal O_X(U)),\] where $U$ runs through all affine open subschemes of $X$, the transition maps are scalar extensions and $\mathcal D(\mathcal O_X(U))$ is the derived $\infty$-category of the ring $\mathcal O_X(U)$. By \cite[Theorem 1.1.4.4]{Lur17}, the $\infty$-category $\mathcal D_{qc}(X)$ is stable.
\end{notation}

\begin{definition}\label{Def1.2}
    Let $X$ be a flat $\mathbb Z_{(p)}$-scheme. Write $X_0:=X\times_{\operatorname{Spec}{\mathbb Z_{(p)}}}\operatorname{Spec}\mathbb F_p$. We define the Frobenius--Witt cotangent complex $F\mathbb L_X\in \mathcal D_{qc}(X_0)$ by associating the Frobenius--Witt cotangent complex $F\mathbb L_{\mathcal O_X(U)}$ to each affine open subscheme $U$ in $X$.
\end{definition}
\begin{remark}
    The well-definedness of the above definition follows from the functoriality of the correspondence\[A\mapsto F\mathbb L_A,\] (\cite[Proposition 4.10]{Shi26}) and the Zariski localization (\cite[Corollary 5.4]{Shi26}).
\end{remark}
The following is the Frobenius--Witt analogue of the transitivity fiber sequence for cotangent complexes of schemes, and it plays a crucial role in this paper.
\begin{theorem}\label{thm1.4}
    Let $f:X\to Y$ be a map of flat $\mathbb Z_{(p)}$-schemes. Let $f_0:X_0\to Y_0$ be the pullback of $f:X\to Y$ along $\operatorname{Spec}\mathbb F_p\to \operatorname{Spec}\mathbb Z_{(p)}$. Then there is a canonical fiber sequence
    \[f_0^*F\mathbb L_Y\to F\mathbb L_X\to F^*_{X_0}\mathbb L_{X_0/Y_0}\to \] in $\mathcal D_{qc}(X_0)$, where $f_0^*$ is the derived pullback along $f_0$ and $F_{X_0}^*$ is the derived pullback along the Frobenius map $F_{X_0}$ on $X_0$.
\end{theorem}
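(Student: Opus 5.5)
The plan is to reduce to the affine statement in \cite{Shi26} and glue. For a map $A\to B$ of flat $\mathbb Z_{(p)}$-algebras (more generally animated $\mathbb Z_{(p)}$-algebras), \cite{Shi26} should provide a functorial transitivity fiber sequence
\[F\mathbb L_A\otimes_{A/p}B/p\to F\mathbb L_B\to \mathbb L_{(B/p)/(A/p)}\otimes_{B/p,F}B/p.\]
If it is not stated there in this form, I would derive it as follows. Use the presentation of $F\mathbb L$ as the left Kan extension from polynomial algebras. For a polynomial map $\mathbb Z_{(p)}[x_i]\to\mathbb Z_{(p)}[x_i,y_j]$, the module $F\mathbb L$ is free on $1$ and the $x_i,y_j$ over $\mathbb F_p[x_i,y_j]$. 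The cofiber of the induced map is free on the Frobenius-twisted differentials $dy_j$, which is $F^*\mathbb L$ of the relative polynomial map. Animating in the pair $(A\to B)$ then gives the affine fiber sequence. The key points are that both the first term and the cofiber commute with sifted colimits, and that base change along $A/p\to B/p$ and Frobenius pullback are colimit-preserving.

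Globalization comes next. For affine opens $V=\operatorname{Spec}B\subseteq X$ and $U=\operatorname{Spec}A\subseteq Y$ with $f(V)\subseteq U$, the affine sequence lives in $\mathcal D(B/p)$. Its terms are the sections on $V_0$ of $f_0^*F\mathbb L_Y$, of $F\mathbb L_X$, and of $F_{X_0}^*\mathbb L_{X_0/Y_0}$. Here I use that $F_{X_0}$ is affine and the identity on points, so $F_{X_0}^*$ on $V_0$ is $-\otimes_{B/p,F}B/p$.

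By functoriality, the sequences are compatible with restriction along inclusions of such pairs $(V,U)$. By Zariski localization \cite[Corollary 5.4]{Shi26}, the restriction maps are equivalences after base change. So the sequences assemble into an object of the limit defining $\mathcal D_{qc}(X_0)$. The pairs $(V,U)$ form a cofinal family of affine opens of $X$, so this limit may be computed over them.

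Because $\mathcal D_{qc}(X_0)$ is stable and fiber sequences are detected affine-locally, the result is a fiber sequence. Independence of the choice of $U$ for fixed $V$ follows from the same functoriality, applied to shrinking $U$ and using Zariski localization on $Y$.

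The main obstacle is coherence. One must produce the affine fiber sequences as a functor of $\infty$-categories on pairs $(A\to B)$, not merely objectwise, so that they glue in the limit of $\infty$-categories. To handle this, I would construct the sequence as a natural transformation of functors $\operatorname{Fun}(\Delta^1,\operatorname{Ani}(\operatorname{Ring}_{\mathbb Z_{(p)}}))\to$ (pairs of a ring and a module). Both are left Kan extended from polynomial pairs, so naturality is automatic. The identification of the cofiber with $F^*\mathbb L$ is an equivalence of left Kan extended functors, and it can be checked on polynomial pairs.
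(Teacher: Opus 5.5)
Your proposal is correct and follows essentially the same route as the paper. The paper's proof is a one-line reduction to the affine sequence of \cite[Theorem~5.3]{Shi26}, plus its naturality, Zariski localization, and the compatibility of Frobenius with localization (exactly your remark that $F_{X_0}$ is the identity on points, so $F_{X_0}^*$ on $V_0$ is $-\otimes_{B/p,F}B/p$). Your animation sketch of the affine case and your treatment of coherence only make explicit what the paper leaves implicit.

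One small correction concerns independence of the choice of $U$. If $Y$ is not separated, two affine opens $U,U'$ containing $f(V)$ need not contain a common affine open containing $f(V)$. For example, take $Y$ the plane with doubled origin and $V=D(x)\sqcup D(y)$ mapping onto $U\cap U'$. So you should argue independence of $U$ by also shrinking $V$ and using Zariski descent on $X_0$, not by shrinking $U$ alone.
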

\begin{proof}
    This follows from the affine case established in \cite[Theorem~5.3]{Shi26}, its naturality and the commutativity of Frobenius maps with Zariski localization maps.
\end{proof}
\begin{coro}\label{cor1.5}
    Let $X$ be a flat $\mathbb Z_{(p)}$-scheme and $X_0$ its pullback along $\operatorname{Spec}\mathbb F_p\to \operatorname{Spec}\mathbb Z_{(p)}$. Then there is a canonical fiber sequence \[\mathcal O_{X_0}\to F\mathbb L_X\to F_{X_0}^*\mathbb L_{X_0/\mathbb F_p}\to \] in $\mathcal D_{qc}(X_0)$.
\end{coro}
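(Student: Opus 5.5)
The plan is to specialize Theorem \ref{thm1.4} to the structure morphism $f\colon X\to Y:=\operatorname{Spec}\mathbb Z_{(p)}$. This $Y$ is a flat $\mathbb Z_{(p)}$-scheme, so Theorem \ref{thm1.4} applies and gives a canonical fiber sequence
\[f_0^*F\mathbb L_{\mathbb Z_{(p)}}\to F\mathbb L_X\to F^*_{X_0}\mathbb L_{X_0/\mathbb F_p}\to\]
in $\mathcal D_{qc}(X_0)$. Here $Y_0=\operatorname{Spec}\mathbb F_p$, so $\mathbb L_{X_0/Y_0}=\mathbb L_{X_0/\mathbb F_p}$. What remains is to identify the first term with $\mathcal O_{X_0}$.

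The key step is to show that $F\mathbb L_{\mathbb Z_{(p)}}\simeq \mathbb F_p$ in $\mathcal D(\mathbb F_p)$, i.e. that it is free of rank one concentrated in degree $0$. I would argue this in two ways that should agree.

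\begin{enumerate}
\item Apply the affine transitivity sequence (\cite[Theorem~5.3]{Shi26}) to the identity map $\mathbb Z_{(p)}\to\mathbb Z_{(p)}$. This only yields a tautology, so it does not suffice by itself.
\item Use the basic computation in \cite{Shi26}: the Frobenius--Witt cotangent complex of $\mathbb Z_{(p)}$, or of any $p$-torsion-free ring with a Frobenius lift, is the module of total $p$-differentials of \cite{DKRZB17}. For $\mathbb Z_{(p)}$ this module is $\mathbb F_p$, generated by $\delta(p)$, i.e. by ``$dp$''.
\end{enumerate}

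Concretely, $\mathbb Z_{(p)}$ carries the identity Frobenius lift. By Remark \ref{remA.1}, the corresponding retraction $F\mathbb L_{\mathbb Z_{(p)}}\to\mathbb F_p$ exists. The cofiber of any map $\mathbb F_p\to F\mathbb L_{\mathbb Z_{(p)}}$ would be $F^*\mathbb L_{\mathbb F_p/\mathbb F_p}=0$, which forces the identification.

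Granting this, $f_0^*F\mathbb L_Y\simeq f_0^*\mathcal O_{\operatorname{Spec}\mathbb F_p}\simeq\mathcal O_{X_0}$, since derived pullback preserves the structure sheaf. Substituting this into the sequence above gives the claimed fiber sequence.

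I expect the main obstacle to be pinning down $F\mathbb L_{\mathbb Z_{(p)}}\simeq\mathbb F_p$ rigorously from the definitions in \cite{Shi26}, rather than the formal specialization. I would first look for an explicit computation of $F\mathbb L$ for $p$-torsion-free rings with Frobenius lift, or for polynomial rings over $\mathbb Z_{(p)}$, in \cite{Shi26}, and read off the case of zero variables. Canonicity of the resulting sequence then follows from the canonicity in Theorem \ref{thm1.4}.
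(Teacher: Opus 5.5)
Your route is the paper's: its proof is the single line ``apply Theorem \ref{thm1.4} to $X\to\operatorname{Spec}\mathbb Z_{(p)}$''. That proof tacitly uses $F\mathbb L_{\mathbb Z_{(p)}}\simeq\mathbb F_p$, and you are right to isolate this identification as the only real input.

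However, drop the ``Concretely'' paragraph, because it is circular. Remark \ref{remA.1} is about retractions of the canonical map $\mathcal O_{X_0}\to F\mathbb L_X$, which is the first map of the very sequence you are constructing. Moreover, that remark is deduced from Theorem \ref{thm3.2}, whose proof uses this corollary. The cofiber step also fails: the cofiber of an arbitrary map $\mathbb F_p\to F\mathbb L_{\mathbb Z_{(p)}}$ is not zero. The cofiber vanishes only for the canonical map, and that vanishing is exactly what has to be shown; Theorem \ref{thm1.4} for the identity of $\mathbb Z_{(p)}$ only gives your tautology (1).

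The non-circular justification is your fallback. Since $\mathbb Z_{(p)}$ is the polynomial $\mathbb Z_{(p)}$-algebra on zero variables, $F\mathbb L_{\mathbb Z_{(p)}}$ is the discrete module $F\Omega_{\mathbb Z_{(p)}}$, which is killed by $p$. The Frobenius--Witt derivation axioms give $w(1)=0$, $w(n)=-\tfrac{n^p-n}{p}\,w(p)$ for $n\in\mathbb Z$, and $w(1/m)=-m^{-2p}w(m)$. Hence $F\Omega_{\mathbb Z_{(p)}}$ is generated by $w(p)$. It is nonzero because $a\mapsto (a-a^p)/p \bmod p$ is a Frobenius--Witt derivation $\mathbb Z_{(p)}\to\mathbb F_p$ sending $p$ to $1$. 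So $F\mathbb L_{\mathbb Z_{(p)}}\simeq\mathbb F_p\cdot w(p)$, consistent with Remark \ref{rem1.6}.

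Separately, your parenthetical claim in item (2) is false in general: $F\mathbb L$ of a $p$-torsion-free ring with a Frobenius lift need not be a discrete module. For $A=\mathbb Z_{(p)}[x]/(x^2)$ with the lift $x\mapsto 0$, $F\mathbb L_A$ contains $F^*_{A/p}\mathbb L_{(A/p)/\mathbb F_p}$ as a summand, and that summand has nonzero $H_1$. You only need the case $\mathbb Z_{(p)}$, so this does not affect the argument.
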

\begin{proof}
    This follows by applying Theorem \ref{thm1.4} to the canonical map $f:X\to \operatorname{Spec}\mathbb Z_{(p)}$.
\end{proof}
\begin{remark}\label{rem1.6}
    In the situation of Corollary \ref{cor1.5}, denote by $s$ the first map\[\mathcal O_{X_0}\to F\mathbb L_{X}\] in the fiber sequence of Corollary \ref{cor1.5}. Then the composition\[\mathcal O_{X_0}\xrightarrow[]{s}F\mathbb L_X\xrightarrow[]{\mathrm{can}}F\Omega_X(=H_0(F\mathbb L_X))\] sends the identity element $1\in \mathcal O_{X_0}(X_0)$ to the element $w(p)\in H^0(X,F\Omega_X)$, where $w$ is the Frobenius--Witt differential (\cite[Definition 2.2]{Sai22}). This follows from \cite[Corollary 5.8]{Shi26}.
\end{remark}

At the end of this preliminary section, we will prove the following statement, which will be used in the proof of the main theorem in Section \ref{sec5}. This proposition means that the lower part of the long exact sequence of the fiber sequence in \cite[Theorem 5.3]{Shi26} is compatible with the one studied in \cite[Proposition 2.3]{Sai22}. 
\begin{prop}\label{pro2.7}
    Let $A\twoheadrightarrow A/I=:B$ be a surjective ring map of $\mathbb Z_{(p)}$-algebras with kernel $I$. Then the $B/p$-linear map \[F_{B/p}^*(I/I^2\otimes_BB/p)\to F\Omega_A\otimes_AB\] induced by the fiber sequence\[F\mathbb L_A\otimes_A^LB\to F\mathbb L_B\to F_{B/^Lp}^*\mathbb (L_{B/A}\otimes_B^LB/^Lp)\to \] in \cite[Theorem 5.3]{Shi26} can be written as \[F_{B/p}^*(I/I^2\otimes_BB/p)\to F\Omega_A\otimes_AB:\overline x\mapsto w(x)\otimes 1,\] where $\overline x\in I/I^2$ is the image of an element $x\in I$ and $w$ is the Frobenius--Witt differential of $A$.
\end{prop}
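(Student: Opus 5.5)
We prove the formula by reducing to a universal case in which the relevant groups are as explicit as possible, and then reading off the connecting map there.

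\textbf{Step 1: the map in degree zero.} The map in question is the degree-one piece of the connecting map
\[F_{B/^Lp}^*(\mathbb L_{B/A}\otimes_B^LB/^Lp)[-1]\to F\mathbb L_A\otimes_A^LB,\]
obtained by applying $H_0$ to the source and target. We use $H_1(\mathbb L_{B/A})\cong I/I^2$ for a surjection, and that $H_0$ of the target is $F\Omega_A\otimes_AB$, since $F\mathbb L_A$ is connective with $H_0=F\Omega_A$. Both the fiber sequence of \cite[Theorem 5.3]{Shi26} and the map $\overline x\mapsto w(x)\otimes 1$ are natural in the pair $(A,I)$. The map $\overline x\mapsto w(x)\otimes1$ is well defined: $w(xy)=x^pw(y)+y^pw(x)$, and $x^p,y^p\in I$ vanish in $B$. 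It is also additive modulo $I$: we have $w(x+y)=w(x)+w(y)-\sum \binom{p}{i}/p\cdot x^iy^{p-i}dx$-type terms, and these lie in $I\cdot F\Omega_A$ after base change. It is $F_{B/p}^*$-semilinear, which matches the pullback structure.

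\textbf{Step 2: reduction to the universal case.} Both sides are additive and compatible with base change, so it suffices to check the formula on a single element $x\in I$. Given $x$, consider the map of pairs
\[(P,(t))=(\mathbb Z_{(p)}[t],(t))\to (A,I),\qquad t\mapsto x.\]
Naturality of the fiber sequence and of $w$ along this map reduces the claim to showing that the generator $\overline t\in (t)/(t^2)$ is sent to $w(t)\otimes 1$. The source $F^*(tP/t^2\otimes \mathbb F_p)$ is free of rank one over $\mathbb F_p$ on $\overline t$. Note that $P$ is flat over $\mathbb Z_{(p)}$, but $B_0=P/t=\mathbb Z_{(p)}$ is also flat, so the formulas of \cite{Shi26} apply on both sides.

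\textbf{Step 3: computing the universal case.} We have $F\Omega_P\otimes_P\mathbb Z_{(p)}$, which is generated over $\mathbb F_p$ by $w(p)$ and $w(t)$; it is free, since $P$ is polynomial. We now have to identify the connecting map. We use Remark \ref{rem1.6} and Corollary \ref{cor1.5} for $\mathbb Z_{(p)}$, whose Frobenius--Witt cotangent complex is $\mathbb F_p\cdot w(p)$, together with the compatibility of the transitivity sequence with the polynomial presentation. The long exact sequence in $H_0,H_1$ becomes
\[0\to \mathbb F_p\overline t\to \mathbb F_pw(p)\oplus\mathbb F_pw(t)\to \mathbb F_p w(p)\to 0.\]
The middle-to-right map kills $w(t)$, because $t\mapsto 0$ in $B$ and $w(0)=0$. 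So the image of $\overline t$ is $c\,w(t)$ for a unit $c$. To pin down $c=1$, we need the explicit description of the connecting map in \cite[Theorem 5.3]{Shi26}. We would compute it through a simplicial resolution, or through the construction there via $W_2$ and square-zero extensions: the class of $\overline t$ lifts to the degree-one element given by $t$ viewed as a $1$-simplex from $t$ to $0$, and the Frobenius--Witt differential applied to it gives $w(t)-w(0)=w(t)$.

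\textbf{Main obstacle.} Steps 1 and 2 are formal. The real work is fixing the normalization of the connecting map in Step 3, namely that $c=1$ and not merely some unit; this requires unwinding the definition of the fiber sequence in \cite{Shi26}.
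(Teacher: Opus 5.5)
Your proposal is essentially the paper's proof: you use the same reduction along $\mathbb Z_{(p)}[t]\to A$, $t\mapsto x$, to the universal surjection $\mathbb Z_{(p)}[t]\to\mathbb Z_{(p)}$, and the normalization you only sketch in Step 3 is exactly the chase the paper carries out in the bar resolution with $Q_0=\mathbb Z_{(p)}[t]$ and $Q_1=\mathbb Z_{(p)}[t,s]$ ($s$ being your $1$-simplex from $t$ to $0$), where $\overline t$ is represented by $ds$ (by the proof of \cite[Claim 3.30]{Bha12}), $ds\otimes 1$ lifts to $w(s)$ in the levelwise short exact transitivity sequence for the polynomial map $\mathbb Z_{(p)}[t]\to Q_1$, and $w(s)$ is sent to $w(t)-w(0)=w(t)$, which is the image of $w(t)\otimes 1$. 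Your extra long-exact-sequence step, which pins the image down to a unit multiple of $w(t)$, is correct but becomes superfluous once this chase is written out.
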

\begin{proof}
    For all $f\in I$, we have a map \[\mathbb Z_{(p)}[x]\to A:x\mapsto f,\] which induces a map $(x)/(x^2)\to I/I^2$. By the functoriality of the fiber sequence, the assertion reduces to the case \[A=\mathbb Z_{(p)}[x]\xrightarrow[]{x\mapsto 0}\mathbb Z_{(p)}.\] In this case, we will prove the statement by a direct computation. Let $Q_{\bullet}$ be the Bar resolution of $\mathbb Z_{(p)}$ as a $\mathbb Z_{(p)}[x]$-algebra. (For the definition, see \cite[Construction 4.13]{Iye07}). For the lower terms, we have $Q_0=\mathbb Z_{(p)}[x]$ and $Q_1=\mathbb Z_{(p)}[x,t]$.  Then we have the following diagram where the  horizontal sequences are exact:
    \[
\begin{tikzcd}
  0 \ar[r] & F\Omega_{\mathbb Z_{(p)}[x]}\otimes_{\mathbb Z_{(p)}[x]}Q_1 \ar[r] & F\Omega_{Q_1} \ar[r] & F^*_{Q_1/p}(\Omega_{Q_1/\mathbb Z_{(p)}[x]}\otimes_{Q_1}Q_1/p) \ar[r] & 0 \\
  0 \ar[r] & F\Omega_{\mathbb Z_{(p)}[x]}\otimes_{\mathbb Z_{(p)}[x]}Q_0 \ar[r] & F\Omega_{Q_0} \ar[r] & F^*_{Q_0/p}(\Omega_{Q_0/\mathbb Z_{(p)}[x]}\otimes_{Q_0}Q_0/p) \ar[r] & 0.
  \ar[from=1-2, to=2-2] 
  \ar[from=1-3, to=2-3] 
  \ar[from=1-4, to=2-4] 
\end{tikzcd}
\]
By the proof of \cite[Claim 3.30]{Bha12}, the element $dt\in \Omega_{Q_1/\mathbb Z_{(p)}[x]}$ is a representative of the element \[\overline x\in I/I^2=(x)/(x^2)\cong \pi_1(\mathbb L_{\mathbb Z_{(p)}/\mathbb Z_{(p)}[x]}).\] Furthermore, the element \[dt\otimes 1\in F^*_{Q_1/p}(\Omega_{Q_1/\mathbb Z_{(p)}[x]}\otimes_{Q_1}Q_1/p)\] is targeted by the element $w(t)\in F\Omega_{Q_1}$ under the upper right horizontal map in the above diagram. Moreover, the element $w(t)\in F\Omega_{Q_1}$ is sent to the element $w(x)\in F\Omega_{Q_0}$, which is targeted by the element $w(x)\otimes 1\in F\Omega_{\mathbb Z_{(p)}[x]}\otimes _{\mathbb Z_{(p)}[x]}Q_0$. This completes the proof.
\end{proof}
\begin{remark}
    This proposition slightly simplifies the proof of \cite[Corollary 6.2]{Shi26}.
\end{remark}

\section{Arithmetic Kodaira--Spencer classes and Frobenius lifts}\label{sec3}
In this section, we define the notion of arithmetic Kodaira--Spencer class for modulo $p^2$-reductions of flat $\mathbb Z_{(p)}$-schemes. Then we prove that the vanishing of this class is equivalent to the existence of Frobenius lifts.

\begin{definition}
    Let $X$ be a flat $\mathbb Z_{(p)}$-scheme. Write $X_0:=X\times _{\operatorname{Spec}\mathbb Z_{(p)}}\operatorname{Spec}\mathbb F_p$. The arithmetic Kodaira--Spencer class $\kappa_{X}$ for $X$ is the element $\kappa_X\in \operatorname{Ext}^1_{X_0}(F^*_{X_0}\mathbb L_{X_0/\mathbb F_p},\mathcal O_{X_0})$ defined by the connecting map $F_{X_0}^*\mathbb L_{X_0/\mathbb F_p}\to \mathcal O_{X_0}[1]$ of the fiber sequence\[\mathcal O_{X_0}\to F\mathbb L_X\to F_{X_0}^*\mathbb L_{X_0/\mathbb F_p}\to \] in Corollary \ref{cor1.5}.
\end{definition}
We can also define the arithmetic Kodaira--Spencer map for animated rings.

\begin{definition}\label{def5.1}
    Let $A$ be an animated $\mathbb Z_{(p)}$-algebra. We define the arithmetic Kodaira--Spencer map \[\kappa_A:F^*_{A/^Lp}\mathbb L_{(A/^Lp)/\mathbb F_p}\to A/^Lp[1]\] by the connecting map of the following fiber sequence in $\mathcal D(A/^Lp)$ (\cite[Theorem 5.3]{Shi26}):\[A/^Lp\to F\mathbb L_A\to F^*_{A/^Lp}\mathbb L_{(A/^Lp)/\mathbb F_p}\to. \]
\end{definition}

\begin{theorem}\label{thm3.2}
    Let $X$ be a flat $\mathbb Z_{(p)}$-scheme. Write $X_0:=X\times _{\operatorname{Spec}\mathbb Z_{(p)}}\operatorname{Spec}\mathbb F_p$ and  $X_1:=X\times _{\operatorname{Spec}\mathbb Z_{(p)}}\operatorname{Spec}\mathbb Z/p^2$. Then the following assertions are equivalent.\begin{enumerate}
        \item The arithmetic Kodaira--Spencer class $\kappa_X$ of $X$ is zero.
        \item There is a Frobenius lift on $X_1$.
    \end{enumerate}
\end{theorem}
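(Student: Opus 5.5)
The plan is to reduce the vanishing of $\kappa_X$ to a splitting statement. Then I will identify splittings of the fiber sequence of Corollary \ref{cor1.5} with Frobenius lifts on $X_1$, the "retraction" picture alluded to in Remark \ref{remA.1}.

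\textbf{Step 1: vanishing means splitting.} Since the sequence $\mathcal O_{X_0}\xrightarrow{s}F\mathbb L_X\to F_{X_0}^*\mathbb L_{X_0/\mathbb F_p}$ is a fiber sequence in the stable $\infty$-category $\mathcal D_{qc}(X_0)$, the connecting map $\kappa_X$ is zero if and only if $s$ admits a retraction $r:F\mathbb L_X\to\mathcal O_{X_0}$ in $\mathcal D_{qc}(X_0)$. Both objects are connective and $\mathcal O_{X_0}$ is discrete. Hence a retraction is the same as an $\mathcal O_{X_0}$-linear map $F\Omega_X=H_0(F\mathbb L_X)\to\mathcal O_{X_0}$ whose composite with $H_0(s)$ is the identity. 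By Remark \ref{rem1.6}, this last condition says exactly that the map sends $w(p)\mapsto 1$.

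\textbf{Step 2: retractions correspond to Frobenius lifts.} I will show that $\mathcal O_{X_0}$-linear maps $\rho:F\Omega_X\to\mathcal O_{X_0}$ with $\rho(w(p))=1$ correspond to Frobenius lifts $\phi:\mathcal O_{X_1}\to\mathcal O_{X_1}$. This is a local statement, and both sides glue: Frobenius lifts form a sheaf, and so do maps out of the quasi-coherent sheaf $F\Omega_X$. It therefore suffices to treat a flat $\mathbb Z_{(p)}$-algebra $A$ and construct the bijection compatibly with localization.

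Given $\phi$ on $A/p^2$, I set $\delta(a)=(\phi(a)-a^p)/p\in A/p$, which makes sense by flatness, and $\delta(p)=(p-p^p)/p\equiv 1$. The map $\delta$ is a $p$-derivation modulo $p$. By the universal property of Saito's module $F\Omega_A$ (the Frobenius--Witt differentials of \cite{Sai22}, linear over $A/p$ through Frobenius), it induces a linear map $\rho$ with $\rho(w(a))=\delta(a)$; in particular $\rho(w(p))=1$. Conversely, given $\rho$, I define $\phi(a)=a^p+p\,\widetilde{\rho(w(a))}$, where the tilde is any lift to $A/p^2$; this is well defined because $p\cdot(\text{lift})$ depends only on the class mod $p$. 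The Frobenius--Witt relations for $w(a+b)$ and $w(ab)$ make $\phi$ additive and multiplicative, and $\phi$ depends only on $a \bmod p^2$. That last point holds because $w(p^2c)$ maps to $0$ in $\mathcal O_{X_0}$ once $\rho(w(p))=1$ is used; this needs to be checked. The two constructions are mutually inverse.

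\textbf{Expected obstacle.} The main difficulty is this last identification. One must verify that Saito's relations (for instance $w(a+b)=w(a)+w(b)-\sum\frac{1}{p}\binom pi a^ib^{p-i}\,w$-type terms, and $w(ab)=a^pw(b)+b^pw(a)$) translate exactly into the ring-homomorphism conditions for $\phi$ modulo $p^2$. One must also check that $\rho$ kills the relations needed for $\phi$ to descend to $A/p^2$. I would first handle the descent check by computing with $p$-derivations $\delta:A\to A/p$, using the known presentation of $F\Omega_A$ as the universal recipient of such $\delta$.
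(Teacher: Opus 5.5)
Your proposal is correct and is essentially the paper's proof. The paper likewise turns $\kappa_X=0$ into a retraction $r$ of $s$ and passes to $r':F\Omega_X\to\mathcal O_{X_0}$ with $r'(w(p))=1$ via Remark~\ref{rem1.6}; it checks that $x\mapsto r'(w(x))$ is invariant under $x\mapsto x+p^2y$ (using $w(p^2y)=(py)^pw(p)+p^pw(py)$ and $P(x,p^2y)\equiv 0 \bmod p$) and sets $F(x)=x^p+p\,r'(w(x))$; conversely it shows $(F-(-)^p)/p$ is a Frobenius--Witt derivation with value $1$ at $p$ and composes the induced map with $F\mathbb L_X\to F\Omega_X$.

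One small correction: the descent check to $A/p^2$ holds for any $\rho$, without the normalization $\rho(w(p))=1$, because both extra terms already vanish in $\mathcal O_{X_0}$. The normalization is actually needed for additivity of $\phi$, since $(a+b)^p-a^p-b^p=p\,P(a,b)$ must match $p\,P(a,b)\,\rho(w(p))$.
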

\begin{proof}
    $(1)\Rightarrow (2)$. Assume $\kappa_X=0$. Then the fiber sequence \[\mathcal O_{X_0}\xrightarrow[]{s} F\mathbb L_X\to F_{X_0}^*\mathbb L_{X_0/\mathbb F_p}\to \] has a splitting, so there is a map $r:F\mathbb L_X\to \mathcal O_{X_0}$ such that $r\circ s=\mathrm{id}$. Since $\mathcal O_{X_0}$ is concentrated in degree zero, the map $r$ induces a map $r':F\Omega_X\to \mathcal O_{X_0}$. By $r\circ s=\mathrm{id}$ and Remark \ref{rem1.6}, we have $r'(w(p))=1$. The composition \[\alpha:\mathcal O_X\xrightarrow[]{w}F\Omega_X\xrightarrow[]{r'}\mathcal O_{X_0},\] where we see $F\Omega_X$ and $\mathcal O_{X_0}$ as quasi-coherent sheaves on $X$ via the (non-derived) pushforward along the closed immersion $X_0\hookrightarrow X$,  satisfies $\alpha(x+p^2y)=\alpha(x)$ for any local sections $x, y\in \mathcal O_X$. Indeed this follows from the following direct computation:\begin{align*}
        \alpha(x+p^2y)&=r'(w(x+p^2y))\\&=r'(w(x))+r'(w(p^2y))-r'(P(x,p^2y)w(p))\\&=\alpha(x)+r'(w(p)(py)^p+p^pw(py))-P(x,p^2y)\\&=\alpha(x),
    \end{align*}
    where we used the axiom of Frobenius--Witt derivations (see \cite[Definition 1.1]{Sai22} for the definition of Frobenius--Witt derivations and the polynomial $P(X,Y)\in \mathbb Z[X,Y]$) in the second equality, $r'(w(p))=1$ in the third equality and $p=0\in \mathcal O_{X_0}$ in the final equality. Therefore, the map $\alpha$ induces a map $\alpha_1:\mathcal O_{X_1}\to \mathcal O_{X_0}$. We define the map $F:\mathcal O_{X_1}\to \mathcal O_{X_1}$ by $x\mapsto x^p+p\alpha_1(x)$. Then the following argument implies that this map is a Frobenius lift on $X_1$.
    \begin{itemize}
        \item The fact that $F$ is a ring map follows from the following computations: \begin{align*}
            F(0)&=0^p+p\alpha_1(0)\\&=p\alpha(0)\\&=pr'(w(0))\\&=0,\\
            F(1)&=1^p+p\alpha_1(1)\\&=1+p\alpha(1)\\&=1+pr'(w(1))\\&=1,\\
            F(\overline x+\overline y)&=(\overline x+\overline y)^p+pr'(w(x+y))\\&=(\overline x+\overline y)^p+pr'(w(x)+w(y)-P(x,y)w(p))\\&=(\overline x+\overline y)^p+p\alpha(x)+p\alpha(y)-pP(x,y)\\&=(\overline x^p+p\alpha_1(\overline x))+(\overline y^p+p\alpha_1(y))\\&=F(\overline x)+F(\overline y),\\
            F(\overline x\overline y)&=(\overline{xy})^p+pr'(w(xy))\\&=(\overline{xy})^p+pr'(w(x)y^p+w(y)x^p)\\&=(\overline{xy})^p+p\alpha_1(\overline x)y^p+p\alpha_1(\overline y)x^p\\&=(\overline x^p+p\alpha_1(\overline x))(\overline y^p+p\alpha_1(\overline y)),
        \end{align*}
        where $x$ and $y$ are any local sections of $\mathcal O_X$ and $\overline x$ and $\overline y$ are their reductions in $\mathcal O_{X_1}$. 
        \item The fact that $F$ is a Frobenius lift follows from the definition of $F$.
    \end{itemize}
$(2)\Rightarrow (1)$. Conversely, assume there is a Frobenius lift $F$ on $X_1$. We also denote by $F$ the endomorphism of the structure sheaf $\mathcal O_{X_1}$ induced by the given Frobenius lift on $X_1$. Then the image of the map \[F-(-)^p:\mathcal O_{X_1}\to \mathcal O_{X_1}\] is contained in $p\mathcal O_{X_1}/p^2\mathcal O_{X_1}$. Since $X_1$ is flat over $\mathbb Z/p^2$, we have \[\mathcal O_{X_0}\cong \mathcal O_{X_1}/p\mathcal O_{X_1}\xrightarrow[\sim]{\times p}p\mathcal O_{X_1}/p^2\mathcal O_{X_1}.\] Composing the above map with the inverse of this isomorphism, we get the following map:
\[\alpha:\mathcal O_X\xrightarrow[]{\mathrm{can}}\mathcal O_{X_1}\xrightarrow[]{F-(-)^p} pO_{X_1}/p^2\mathcal O_{X_1}\cong \mathcal O_{X_0}.\] The following computations imply that the map $\alpha$ is a Frobenius--Witt derivation.\begin{align*}
    p\alpha(x+y)&=F(x+y)-(x+y)^p\\&=(F(x)-x^p)+(F(y)-y^p)-((x+y)^p-x^p-y^p)\\&=p(\alpha(x)+\alpha(y)-P(x,y))\in p\mathcal O_{X_1}/p^2\mathcal O_{X_1},\\
    p\alpha(xy)&=F(xy)-(xy)^p\\&=F(x)F(y)-x^py^p\\&=(x^p+p\alpha(x))(y^p+p\alpha(y))-x^py^p\\&=p(\alpha(x)y^p+\alpha(y)x^p)\in p\mathcal O_{X_1}/p^2\mathcal O_{X_1}. 
\end{align*}
Thus the map $\alpha$ induces a $\mathcal O_{X_0}$-linear map \[h:F\Omega_{X}^1\to \mathcal O_{X_0}.\] Since we have \begin{align*}
    p\alpha(p)&=F(p)-p^p\\&=p-p^p\\&=p\in p\mathcal O_{X_1}/p^2\mathcal O_{X_1},
\end{align*} $\alpha(p)=1$ holds. Therefore, Remark \ref{rem1.6} ensures that the composition\[r:F\mathbb L_X\xrightarrow[]{\mathrm{can}}F\Omega_X\xrightarrow[]{h}\mathcal O_{X_0}\] is a retraction of the map $s:\mathcal O_{X_0}\to F\mathbb L_{X}$. This means $\kappa_X=0$.
\end{proof}

\begin{remark}\label{remA.1}
    In the situation of Theorem \ref{thm3.2}, the above proof implies that there is a natural bijection between the set of Frobenius lifts on $X_1$ and the set of retractions of the canonical map $\mathcal O_{X_0}\to F\mathbb L_X$.
\end{remark}

\begin{theorem}\label{thm3.3}
    Let $X$ be a flat $\mathbb Z_{(p)}$-scheme. Write $X_0:=X\times _{\operatorname{Spec}\mathbb Z_{(p)}}\operatorname{Spec}\mathbb F_p$ and  $X_1:=X\times _{\operatorname{Spec}\mathbb Z_{(p)}}\operatorname{Spec}\mathbb Z/p^2$. Assume that the scheme $X_1$ has a Frobenius lift. Then the set of Frobenius lifts on $X_1$ is a torsor under the group $\operatorname{Hom}_{X_0}(F_{X_0}^*\mathbb L_{X_0/\mathbb F_p},\mathcal O_{X_0})$.
\end{theorem}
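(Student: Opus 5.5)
Using Remark \ref{remA.1}, Frobenius lifts on $X_1$ correspond bijectively to retractions $r\colon F\mathbb L_X\to\mathcal O_{X_0}$ of the canonical map $s\colon\mathcal O_{X_0}\to F\mathbb L_X$ from Corollary \ref{cor1.5}. So it suffices to prove that, once one retraction exists, the set of retractions is a torsor under $\operatorname{Hom}_{X_0}(F_{X_0}^*\mathbb L_{X_0/\mathbb F_p},\mathcal O_{X_0})$. This is the usual statement that splittings of a split fiber sequence form a torsor under maps from the cofiber to the fiber. Here we work at the level of homotopy classes, i.e.\ in the homotopy category of $\mathcal D_{qc}(X_0)$; the bijection of Remark \ref{remA.1} is also at that level, because $\mathcal O_{X_0}$ is discrete.

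Write $\pi\colon F\mathbb L_X\to F_{X_0}^*\mathbb L_{X_0/\mathbb F_p}$ for the second map of the fiber sequence. The action is $\varphi\cdot r:=r+\varphi\circ\pi$. This is again a retraction, since $\pi\circ s\simeq0$ gives $(r+\varphi\pi)s=rs=\mathrm{id}$. It is clearly an action of the additive group.

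For freeness and transitivity, apply $\operatorname{Hom}(-,\mathcal O_{X_0})$ (homotopy classes in $\mathcal D_{qc}(X_0)$) to the fiber sequence. This gives an exact sequence
\[\operatorname{Hom}(\mathcal O_{X_0}[1],\mathcal O_{X_0})\to\operatorname{Hom}(F_{X_0}^*\mathbb L_{X_0/\mathbb F_p},\mathcal O_{X_0})\xrightarrow{\pi^*}\operatorname{Hom}(F\mathbb L_X,\mathcal O_{X_0})\xrightarrow{s^*}\operatorname{Hom}(\mathcal O_{X_0},\mathcal O_{X_0}).\]
\emph{Transitivity.} For two retractions $r,r'$ we have $s^*(r'-r)=0$, so $r'-r=\varphi\circ\pi$ for some $\varphi$.
\emph{Freeness.} We need $\pi^*$ to be injective, i.e.\ that the first map vanishes. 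That map is precomposition with the connecting map $\kappa_X$. This is where the existence of a Frobenius lift enters: by Theorem \ref{thm3.2} it forces $\kappa_X=0$. Alternatively, $\operatorname{Hom}(\mathcal O_{X_0}[1],\mathcal O_{X_0})=\operatorname{Ext}^{-1}_{X_0}(\mathcal O_{X_0},\mathcal O_{X_0})=0$ because $\mathcal O_{X_0}$ is discrete, so the first group is zero regardless. Either argument gives injectivity of $\pi^*$, hence freeness.

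Finally, identify the group. $F_{X_0}^*\mathbb L_{X_0/\mathbb F_p}$ is connective and $\mathcal O_{X_0}$ is discrete, so
\[\operatorname{Hom}(F_{X_0}^*\mathbb L_{X_0/\mathbb F_p},\mathcal O_{X_0})=\operatorname{Hom}_{\mathcal O_{X_0}}(F_{X_0}^*\Omega_{X_0},\mathcal O_{X_0}),\]
and this is the group $\operatorname{Hom}_{X_0}$ in the statement.

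The main obstacle is a compatibility check. We must confirm that, under the bijection of Remark \ref{remA.1}, the action just described matches the concrete modification of a Frobenius lift, $F\mapsto F+p\,(\varphi\circ d)$ with $d\colon\mathcal O_{X_0}\to F_{X_0}^*\Omega_{X_0}$, $x\mapsto dx\otimes1$. For the torsor statement itself this is not needed, since transporting the structure along a bijection suffices. For naturality, one checks it through the explicit formula $F(x)=x^p+p\,r'(w(x))$. The needed input is that the composite $F\Omega_X\to F_{X_0}^*\Omega_{X_0}$ sends $w(x)\mapsto dx\otimes1$, which holds up to the sign conventions of \cite{Shi26}.
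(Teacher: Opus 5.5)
Your argument is correct and is essentially the paper's. The paper applies $\operatorname{Hom}_{X_0}(F_{X_0}^*\mathbb L_{X_0/\mathbb F_p},-)$, as mapping spaces, to the fiber sequence of Corollary \ref{cor1.5} and identifies Frobenius lifts with the fiber over $\mathrm{id}$, i.e.\ with sections of the projection. You instead apply $\operatorname{Hom}(-,\mathcal O_{X_0})$ and work directly with retractions as in Remark \ref{remA.1}. This makes transitivity and freeness explicit via $\operatorname{Hom}(\mathcal O_{X_0}[1],\mathcal O_{X_0})=0$, where the paper leaves the discreteness of the mapping spaces implicit. As your second argument already shows, the existence of a lift is needed only for nonemptiness of the torsor, not for freeness.
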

\begin{proof}
    By Corollary \ref{cor1.5}, we have the following fiber sequence in the infinity category $\mathcal S$ of spaces :\[\operatorname{Hom}_{X_0}(F_{X_0}^*\mathbb L_{X_0/\mathbb F_p},\mathcal O_{X_0})\to \operatorname{Hom}_{X_0}(F_{X_0}^*\mathbb L_{X_0/\mathbb F_p},F\mathbb L_{X})\xrightarrow[]{\pi} \operatorname{Hom}_{X_0}(F_{X_0}^*\mathbb L_{X_0/\mathbb F_p},F_{X_0}^*\mathbb L_{X_0/\mathbb F_p}).\] By the proof of Theorem \ref{thm3.2}, the set of Frobenius lifts on $X_1$ is isomorphic to the space of splittings of the fiber sequence in Corollary \ref{cor1.5}, which is isomorphic to the fiber of $\pi$ over $\mathrm{id}\in \operatorname{Hom}_{X_0}(F_{X_0}^*\mathbb L_{X_0/\mathbb F_p},F_{X_0}^*\mathbb L_{X_0/\mathbb F_p})$.
\end{proof}
Next, we see an example of a computation of the Frobenius--Witt cotangent complex using Theorem \ref{thm3.2}:
\begin{prop}
    Let $X$ be an abelian scheme over $\mathbb Z_{(p)}$. Write $X_n:=X\times_{\operatorname{Spec}\mathbb Z_{(p)}}\operatorname{Spec}\mathbb Z/p^{n+1}$ for $n\ge0$. Then the following assertions are equivalent:\begin{enumerate}
        \item $X_1$ has a Frobenius lift,
        \item the Frobenius--Witt cotangent complex $F\mathbb L_X$ is a free $\mathcal O_{X_0}$-module.
    \end{enumerate}
\end{prop}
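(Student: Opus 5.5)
The plan is to reduce both directions to the fiber sequence of Corollary \ref{cor1.5}, using the fact that for an abelian scheme every term in it except possibly $F\mathbb L_X$ is already a free $\mathcal O_{X_0}$-module. Since $X\to\operatorname{Spec}\mathbb Z_{(p)}$ is smooth, $X_0$ is an abelian variety over $\mathbb F_p$. It is smooth over $\mathbb F_p$ of some relative dimension $g$, so $\mathbb L_{X_0/\mathbb F_p}\simeq\Omega_{X_0/\mathbb F_p}$ is concentrated in degree $0$. Moreover $\Omega_{X_0/\mathbb F_p}\cong\mathcal O_{X_0}^{\oplus g}$, trivialized by translation-invariant differentials. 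Hence $F_{X_0}^*\mathbb L_{X_0/\mathbb F_p}\simeq\mathcal O_{X_0}^{\oplus g}$. The fiber sequence $\mathcal O_{X_0}\xrightarrow{s}F\mathbb L_X\to\mathcal O_{X_0}^{\oplus g}\to$ then shows that $F\mathbb L_X$ is concentrated in degree $0$. It is a quasi-coherent sheaf $E$ sitting in a short exact sequence
\[0\to\mathcal O_{X_0}\to E\to\mathcal O_{X_0}^{\oplus g}\to0,\]
so $E$ is a vector bundle of rank $g+1$. The class of this extension is $\kappa_X\in\operatorname{Ext}^1_{X_0}(\mathcal O_{X_0}^{\oplus g},\mathcal O_{X_0})\cong H^1(X_0,\mathcal O_{X_0})^{\oplus g}$.

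For $(1)\Rightarrow(2)$: by Theorem \ref{thm3.2}, a Frobenius lift on $X_1$ gives $\kappa_X=0$. Hence the sequence splits and $F\mathbb L_X\simeq\mathcal O_{X_0}\oplus\mathcal O_{X_0}^{\oplus g}$ is free.

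For $(2)\Rightarrow(1)$: suppose $E\cong\mathcal O_{X_0}^{\oplus(g+1)}$. Take the long exact cohomology sequence
\[0\to H^0(\mathcal O_{X_0})\to H^0(E)\to H^0(\mathcal O_{X_0}^{\oplus g})\xrightarrow{\delta}H^1(\mathcal O_{X_0})^{\oplus g}\cdots\]
Since $X_0$ is proper and geometrically integral over $\mathbb F_p$, we have $H^0(X_0,\mathcal O_{X_0})=\mathbb F_p$. Here I use that $X$ has connected (indeed geometrically integral) fibers, since it is an abelian scheme over the connected base $\operatorname{Spec}\mathbb Z_{(p)}$. The first three terms then have $\mathbb F_p$-dimensions $1$, $g+1$ and $g$. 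Counting dimensions, the map $H^0(E)\to H^0(\mathcal O_{X_0}^{\oplus g})$ is surjective, so $\delta=0$. But $\delta$ sends the $i$-th standard basis section $e_i$ to the $i$-th component of the extension class, i.e. of $\kappa_X$ viewed in $H^1(X_0,\mathcal O_{X_0})^{\oplus g}$. Hence $\kappa_X=0$, and Theorem \ref{thm3.2} produces a Frobenius lift on $X_1$. Equivalently and more concretely: lifts $\tilde e_i\in H^0(E)$ of the $e_i$ define a section $\mathcal O_{X_0}^{\oplus g}\to E$, which gives a splitting and hence a retraction of $s$.

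The main point needing care is the identification of $\delta(e_i)$ with the components of $\kappa_X$. This is the standard fact that, for an extension of a free module, the Yoneda class is computed by the connecting map on the basis sections, combined with the identification $\operatorname{Ext}^1(\mathcal O^{\oplus g},\mathcal O)=H^1(\mathcal O)^{\oplus g}$. The other input to check is $H^0(X_0,\mathcal O_{X_0})=\mathbb F_p$, which uses that the abelian scheme has geometrically connected fibers. The remaining steps are formal.
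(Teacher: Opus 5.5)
Your proof is correct. Direction $(1)\Rightarrow(2)$ is the paper's argument. For $(2)\Rightarrow(1)$ you argue dually to the paper.

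The paper fixes an isomorphism $F\mathbb L_X\simeq\mathcal O_{X_0}^{\oplus m}$. It observes that $s\colon\mathcal O_{X_0}\to\mathcal O_{X_0}^{\oplus m}$ is given by functions $a_1,\dots,a_m\in H^0(X_0,\mathcal O_{X_0})=\mathbb F_p$, at least one nonzero by injectivity. It then writes down the retraction $(g_1,\dots,g_m)\mapsto g_i/a_i$ directly. You instead show that $H^0(E)\to H^0(\mathcal O_{X_0}^{\oplus g})$ is surjective by counting dimensions, and lift the basis sections to a section of the quotient map.

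Both arguments rest on the same input, $H^0(X_0,\mathcal O_{X_0})=\mathbb F_p$. The paper's version is slightly more economical: it never uses that the quotient $F^*_{X_0}\mathbb L_{X_0/\mathbb F_p}$ is free or that $F\mathbb L_X$ has rank $g+1$. It only uses that the subobject is $\mathcal O_{X_0}$, whose global sections are constants.

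Your version uses the triviality of $\Omega_{X_0/\mathbb F_p}$ a second time. In return it makes visible why freeness forces vanishing: the components of $\kappa_X\in H^1(X_0,\mathcal O_{X_0})^{\oplus g}$ are the images $\delta(e_i)$ of the connecting map. Note that this identification, which you flag as the delicate point, is not actually needed. Your concrete lifting of the $e_i$ already produces a splitting, hence a retraction of $s$, and Theorem \ref{thm3.2} applies.
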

\begin{proof}
   $(1)\Rightarrow (2)$. By Theorem \ref{thm3.2}, the fiber sequence\[\mathcal O_{X_0}\to F\mathbb L_X\to F_{X_0}^*\mathbb L_{X_0/\mathbb F_p}\to \] splits. Since $X_0$ is an abelian variety over $\mathbb F_p$, the cotangent complex $\mathbb L_{X_0/\mathbb F_p}$ is a free $\mathcal O_{X_0}$-module. Combining them, we see that the Frobenius--Witt cotangent complex $F\mathbb L_X$ is a free $\mathcal O_{X_0}$-module.

   $(2)\Rightarrow (1)$. Assume that the Frobenius--Witt cotangent complex $F\mathbb L_X$ is a free $\mathcal O_{X_0}$-module. Then the fiber sequence\[\mathcal O_{X_0}\to F\mathbb L_X\to F_{X_0}^*\mathbb L_{X_0/\mathbb F_p}\to\] is a short exact sequence of finite free $\mathcal O_{X_0}$-modules. Take an isomorphism $F\mathbb L_X\simeq \mathcal O_{X_0}^{\oplus m}$. Then the first map $\mathcal O_{X_0}\to F\mathbb L_X$ in the above fiber sequence can be identifies with the $\mathcal O_{X_0}$-linear injection $\mathcal O_{X_0}\hookrightarrow \mathcal O_{X_0}^{\oplus m}$. Since $X_0$ is geometrically integral and proper over $\mathbb F_p$, this map can be written as \[f\mapsto (a_1f,a_2f,\dots,a_mf)\] for any local section $f\in \mathcal O_{X_0}$ using some elements $a_1,\dots,a_m\in \mathbb F_p$. Since this map is injective, there is some $i\in \{1,\dots,m\}$ such that $a_i\neq0\in \mathbb F_p$. Using such $i$, we can define an $\mathcal O_{X_0}$-linear map\[\mathcal O_{X_0}^{\oplus m}\to \mathcal O_{X_0}:(g_1,\dots,g_m)\mapsto g_i/a_i,\] which is a retraction of the map $\mathcal O_{X_0}\to \mathcal O_{X_0}^{\oplus m}$. Therefore, Theorem \ref{thm3.2} implies the assertion $(1)$.
\end{proof}

At the end of this section, we define a relative version of arithmetic Kodaira--Spencer class and prove its properties.

\begin{definition}
    Let $f:X\to Y$ be a morphism of flat $\mathbb Z_{(p)}$-schemes. Let $f_n:X_n\to Y_n$ denote the pullback of $f$ along the canonical morphism $\operatorname{Spec}\mathbb Z/p^{n+1}\to \operatorname{Spec}\mathbb Z$ for non-negative integers $n\ge0$. The relative arithmetic Kodaira--Spencer class $\kappa_{X/Y}\in \operatorname{Ext}_{X_0}^1(F_{X_0}^*\mathbb L_{X_0/Y_0},f_0^*F\mathbb L_Y)$ is the element defined as the connecting map of the fiber sequence\[f_0^*F\mathbb L_Y\to F\mathbb L_X\to F_{X_0}^*\mathbb L_{X_0/Y_0}\to \] of Theorem \ref{thm1.4}.
\end{definition}
We can also define the relative arithmetic Kodaira--Spencer map for maps of animated rings:
\begin{definition}
    Let $A\to B$ be a map of animated $\mathbb Z_{(p)}$-algebras. The relative arithmetic Kodaira--Spencer map $\kappa_{B/A}:F_{B/^Lp}^*\mathbb L_{(B/^Lp)/(A/^Lp)}\to F\mathbb L_A\otimes_A^LB[1]$ is defined by the connecting map of the fiber sequence:\[F\mathbb L_A\otimes_A^LB\to F\mathbb L_B\to F_{B/^Lp}^*\mathbb L_{(B/^Lp)/(A/^Lp)}\to \] of \cite[Theorem 5.3]{Shi26}.
\end{definition}
The following assertion is a relative version of Theorem \ref{thm3.2}.
\begin{theorem}\label{thm3.9}
    Let $f:X\to Y$ be a morphism of flat $\mathbb Z_{(p)}$-schemes. Let $f_n:X_n\to Y_n$ denote the pullback of $f$ along the canonical morphism $\operatorname{Spec}\mathbb Z/p^{n+1}\to \operatorname{Spec}\mathbb Z$ for $n\ge0$. Assume that there is a Frobenius lift on $Y_1$ with corresponding retraction $r:F\mathbb L_Y\to \mathcal O_{Y_0}$ (Remark \ref{remA.1}). Then the following assertions are equivalent.
    \begin{enumerate}
        \item The composition\[F_{X_0}^*\mathbb L_{X_0/Y_0}\xrightarrow[]{\kappa_{X/Y}}f_0^*F\mathbb L_Y[1]\xrightarrow[]{f_0^*r}f_0^*\mathcal O_{Y_0}[1]\simeq \mathcal O_{X_0}[1]\] is zero,
        \item there is a Frobenius lift on $X_1$ which is compatible with the given one on $Y_1$.
    \end{enumerate}
\end{theorem}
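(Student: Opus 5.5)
The plan is to reduce the statement to an extension problem for the retraction $r$. By Remark \ref{remA.1}, Frobenius lifts on $X_1$ are in natural bijection with retractions $r_X\colon F\mathbb L_X\to\mathcal O_{X_0}$ of $s_X\colon\mathcal O_{X_0}\to F\mathbb L_X$. So the first step is to show that $F_X$ is compatible with $F_Y$ exactly when
\[
r_X\circ\varphi=f_0^*r,
\]
where $\varphi\colon f_0^*F\mathbb L_Y\to F\mathbb L_X$ is the first map of the fiber sequence of Theorem \ref{thm1.4}. I would argue this on affine opens, reducing to rings $A\to B$. Under the bijection of Theorem \ref{thm3.2}, a lift $F$ corresponds to the Frobenius--Witt derivation $\alpha=(F-(-)^p)/p$, and $r_X$ is determined by the induced map $h\colon F\Omega\to\mathcal O_{X_0}$, because $\mathcal O_{X_0}$ sits in degree $0$. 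By functoriality of $w$, the map $\varphi$ on $H_0$ sends $w(a)\otimes1$ to $w(f(a))$.

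Compatibility $F_X\circ f=f\circ F_Y$ on $\mathcal O_{Y_1}$ is then equivalent to $\alpha_X(f(a))=f(\alpha_Y(a))$ for local sections $a$ of $\mathcal O_Y$. This holds because $p$-multiplication $\mathcal O_{X_0}\to p\mathcal O_{X_1}/p^2$ is injective by flatness. The equality $\alpha_X\circ f=f\circ\alpha_Y$ is in turn equivalent to $h_X\circ H_0(\varphi)=f_0^*h_Y$, since both sides are determined by their values on the generators $w(a)\otimes1$. Finally, $\mathcal O_{X_0}$ is discrete and $H_0$ commutes with $f_0^*$ on connective objects, so this is equivalent to the displayed equality of maps in $\mathcal D_{qc}(X_0)$.

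It also matters that $s_X=\varphi\circ f_0^*s_Y$, which follows from the naturality of Theorem \ref{thm1.4} applied to $X\to Y\to\operatorname{Spec}\mathbb Z_{(p)}$. Because of this, any $r_X$ with $r_X\circ\varphi=f_0^*r$ automatically satisfies $r_X\circ s_X=f_0^*(r\circ s_Y)=\mathrm{id}$ and so is a retraction. Hence (2) is equivalent to: the map $f_0^*r\colon f_0^*F\mathbb L_Y\to\mathcal O_{X_0}$ extends along $\varphi$ to a map $F\mathbb L_X\to\mathcal O_{X_0}$.

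The second step is pure stable category theory. Apply $\operatorname{Map}(-,\mathcal O_{X_0})$ to the fiber sequence $f_0^*F\mathbb L_Y\to F\mathbb L_X\to F^*_{X_0}\mathbb L_{X_0/Y_0}\xrightarrow{\kappa_{X/Y}}f_0^*F\mathbb L_Y[1]$. The resulting long exact sequence of $\pi_0$'s shows that a map $g\colon f_0^*F\mathbb L_Y\to\mathcal O_{X_0}$ extends along $\varphi$ if and only if $g[1]\circ\kappa_{X/Y}=0$. Taking $g=f_0^*r$, this is exactly condition (1).

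The main obstacle is the first step, and specifically one point in it. We need the bijection of Remark \ref{remA.1} to be natural in $f$, and we need that equality of retractions, as maps in the $\infty$-category, can be tested on $H_0$ and on the generators $w(a)$. The $H_0$ test is fine because maps into a discrete object factor through $\tau_{\le0}$. On the generators, we need the identification of $H_0(\varphi)$ with the map induced by $w$, which I would take from the functoriality in \cite[Proposition 4.10]{Shi26} together with Remark \ref{rem1.6}.
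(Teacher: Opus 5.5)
Your proposal is correct and follows essentially the same route as the paper. In both, the two directions reduce to two facts: a retraction $q$ of $\mathcal O_{X_0}\to F\mathbb L_X$ gives a Frobenius lift compatible with the one on $Y_1$ exactly when $q\circ\varphi=f_0^*r$, and $f_0^*r$ extends along $\varphi$ if and only if $f_0^*r\circ\kappa_{X/Y}=0$ (the paper phrases the second via the pushout square). Your explicit check of the compatibility criterion through the derivations $\alpha=(F-(-)^p)/p$, and of the identity $s_X=\varphi\circ f_0^*s_Y$, supplies steps the paper only asserts.
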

\begin{proof}
$(1)\Rightarrow (2)$. Assume that the composition $f_0^*r\circ \kappa_{X/Y}$ is zero. By the universal property of pushout, we get the following commutative diagram:
\[
\begin{tikzcd}
  F^*_{X_0}\mathbb L_{X_0/Y_0} \lbrack -1 \rbrack \arrow[r, "\kappa_{X/Y}\lbrack -1\rbrack "] \arrow[d,] 
    & f^*_0F\mathbb L_{Y} \arrow[d, ] \arrow[ddr, "f^*_0r", bend left=20] \\
  0 \arrow[r, ] \arrow[drr, bend right=20] 
    & F\mathbb L_X \arrow[dr, "q"] \\
  & & \mathcal O_{X_0}.
\end{tikzcd}
\]
The upper triangle in the above diagram yields the following commutative diagram:
\[
\begin{tikzcd}
  \mathcal O_{X_0} \arrow[d] \arrow[ddr, "\mathrm{id}", bend left=20] & \\
  f^*_0F\mathbb L_{Y} \arrow[d] \arrow[dr, "f^*_0r"] & \\
  F\mathbb L_X \arrow[r, "q"] & \mathcal O_{X_0},
\end{tikzcd}
\]
which implies that the map $q$ is a retraction of the canonical map $\mathcal O_{X_0}\to F\mathbb L_X$ and the corresponding Frobenius lift on $X_1$ is compatible with the one on $Y_1$ corresponding to $r$.

$(2)\Rightarrow (1)$. Assume that there is a Frobenius lift on $X_1$ which is compatible with the given one on $Y_1$. Let $q$ be the corresponding retraction of the canonical map $ \mathcal O_{X_0}\to F\mathbb L_X$. Since the Frobenius lift under consideration on $X_1$ is compatible with the given one on $Y_1$, we have the following commutative diagram:

\[\begin{tikzcd}
    f_0^*F\mathbb L_Y \arrow[d] \arrow[dr, "f_0^*r"] &\\
    F\mathbb L_X \arrow[r, "q"] & \mathcal O_{X_0}.
\end{tikzcd}\]
Since the composition
\[ F^*_{X_0}\mathbb L_{X_0/Y_0} \lbrack -1 \rbrack \xrightarrow[]{\kappa_{X/Y}[-1]}f_0^*F\mathbb L_Y\to F\mathbb L_X\]
 is zero, the composition $f^*_0r\circ \kappa_{X/Y}$ is also zero.

\end{proof}

\section{Comparison with Deligne--Illusie classes}
In this section, we prove that the arithmetic Kodaira--Spencer class for smooth schemes coincides with the Deligne--Illusie class defined by the cocycle arising from the local Frobenius lifts. This concept goes back to the earlier work of Deligne--Illusie (\cite{DI87}), while the terminology was introduced in \cite{DKRZB17}.
\begin{definition}{\cite{DI87}}
    Let $X_1$ be a smooth $\mathbb Z/p^2$-scheme. Let $\{U_i\}_{i\in I}$ be an affine open covering of $X_1$ such that each $U_{i}$ has a Frobenius lift $F_i$ (such a covering always exists by the lifting property of smoothness). Then the difference \[F_i-F_j:\mathcal O_{X_1}(U_i\cap U_j)\to \mathcal O_{X_1}(U_i\cap U_j)\] induces the following map:\[\frac{1}{p}(F_i-F_j):\mathcal O_{X_1}(U_i\cap U_j)\to p\mathcal O_{X_1}(U_i\cap U_j)\xrightarrow[\sim]{\times 1/p}\mathcal O_{X_0}(U_i\cap U_j),\] which is a derivation. Thus this derivation induces the following $\mathcal O_{X_0}$-linear map:\[h_{ij}:\Omega_{X_0/\mathbb F_p}\to F_{X_0,*}\mathcal O_{X_0}.\] By construction, the collection $\{h_{ij}\}_{i,j\in I}$ satisfies the cocycle condition, so it defines a class of $H^1(X_0,F_{X_0}^*T_{X_0/\mathbb F_p})$, where $T_{X_0/\mathbb F_p}$ is the tangent sheaf of $X_0$ over $\mathbb F_p$. We call this class the Deligne--Illusie class of $X_1$ and denote it by $h_{X_1}$.
\end{definition}
\begin{theorem}
    Let $X$ be a smooth $\mathbb Z_{(p)}$-scheme. Write $X_1:=X\times _{\operatorname{Spec}\mathbb Z_{(p)}}\operatorname{Spec}\mathbb Z/p^2$ and $X_0:=X\times _{\operatorname{Spec}\mathbb Z_{(p)}}\operatorname{Spec}\mathbb F_p$. Then the arithmetic Kodaira--Spencer class of $X$ and the Deligne--Illusie class coincide up to sign:\[\kappa_X=-h_{X_1}\] under the identification \[\operatorname{Ext}^1_{X_0}(F^*_{X_0}\mathbb L_{X_0/\mathbb F_p},\mathcal O_{X_0})\cong H^1(X_0,F_{X_0}^*T_{X_0/\mathbb F_p}).\]
\end{theorem}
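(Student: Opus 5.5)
Since $X$ is smooth over $\mathbb Z_{(p)}$, $X_0$ is smooth over $\mathbb F_p$, so $\mathbb L_{X_0/\mathbb F_p}\simeq\Omega_{X_0/\mathbb F_p}$ is locally free and $F_{X_0}^*\mathbb L_{X_0/\mathbb F_p}$ is a vector bundle. The fiber sequence of Corollary \ref{cor1.5} therefore has all three terms discrete, so $F\mathbb L_X\simeq F\Omega_X$ and it is a short exact sequence of quasi-coherent sheaves
\[0\to\mathcal O_{X_0}\xrightarrow{s}F\Omega_X\to F^*_{X_0}\Omega_{X_0/\mathbb F_p}\to0.\]
Its Yoneda class in $\operatorname{Ext}^1$ is $\kappa_X$, up to the standard sign convention relating connecting maps of fiber sequences to extension classes; I will fix one convention and track it. Because $F^*_{X_0}\Omega_{X_0}$ is locally free, $\operatorname{Ext}^1_{X_0}(F^*_{X_0}\Omega_{X_0},\mathcal O_{X_0})\cong H^1(X_0,F^*_{X_0}T_{X_0})$. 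Under this isomorphism the class of such an extension is computed Čech-theoretically: choose local retractions $r_i$ of $s$ on an affine cover $\{U_i\}$; the differences $r_i-r_j$ vanish on $\mathcal O_{X_0}$, so they factor through $F^*_{X_0}\Omega_{X_0}$ and give a cocycle in $\operatorname{Hom}(F^*_{X_0}\Omega_{X_0},\mathcal O_{X_0})(U_{ij})$. Its class is the extension class, again up to a universal sign. I expect fixing these signs to be the main obstacle. The cleanest approach is to compute both sides in the universal example of a two-term extension and verify the sign once.

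Next, take the cover $\{U_i\}$ with Frobenius lifts $F_i$ on $U_i\cap X_1$. By the proof of Theorem \ref{thm3.2} $(2)\Rightarrow(1)$, each $F_i$ gives the Frobenius--Witt derivation $\alpha_i=\frac1p(F_i-(-)^p)$. This induces $h_i:F\Omega_X|_{U_i}\to\mathcal O_{X_0}$ with $h_i(w(p))=1$, which is a retraction $r_i$ of $s$ by Remark \ref{rem1.6}. Then $(r_i-r_j)(w(x))=\alpha_i(x)-\alpha_j(x)=\frac1p(F_i-F_j)(x)$. It remains to identify the induced map $F^*_{X_0}\Omega_{X_0}\to\mathcal O_{X_0}$ with $h_{ij}$. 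For this I need to know that the quotient map $F\Omega_X\to F^*_{X_0}\Omega_{X_0}$ sends $w(x)$ to $1\otimes d\bar x$, which is presumably part of the construction in \cite[Theorem 5.3]{Shi26}, and I would cite it there. Given this, $(r_i-r_j)$ corresponds to the map $1\otimes d\bar x\mapsto\frac1p(F_i-F_j)(x)$, which is exactly the $\mathcal O_{X_0}$-linearization of the derivation defining $h_{ij}$. The needed additivity and linearity come from the derivation property established in the definition of the Deligne--Illusie class. Hence the cocycle $\{r_i-r_j\}$ equals $\{h_{ij}\}$.

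Finally, compare the sign conventions. The Čech class of an extension obtained from the differences of local splittings $r_i-r_j$ is the negative of the class coming from the boundary map when that boundary is formed with differences of local sections $\sigma_i-\sigma_j$ of the quotient map. The switch from retractions to sections, namely $\sigma_i-\sigma_j=-(s\circ(r_i-r_j))$ after identification, is where the sign arises. Matching this with the connecting-map convention defining $\kappa_X$ gives $\kappa_X=-h_{X_1}$. I would verify the sign concretely on a Čech double complex model of the connecting map $F^*_{X_0}\Omega_{X_0}\to\mathcal O_{X_0}[1]$, which is the delicate step.
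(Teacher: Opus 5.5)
Your proposal is correct and follows essentially the paper's argument. You reduce to the short exact sequence $0\to\mathcal O_{X_0}\to F\Omega_X\to F^*_{X_0}\Omega_{X_0/\mathbb F_p}\to 0$ and identify local Frobenius lifts with local retractions of $\mathcal O_{X_0}\to F\Omega_X$ via Theorem \ref{thm3.2}. You then use that the quotient map sends $w(x)\mapsto dx$ and get the sign from the identity relating differences of retractions to differences of sections of the quotient. The only difference is direction: the paper starts from local sections $s_i$ of the quotient map $r$ and builds the lifts $F_i(x)=x^p+p(w(x)-s_irw(x))$, whereas you start from given lifts and pass to the retractions. The paper also takes the section-difference Čech convention for the extension class as given, exactly as you do.
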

\begin{proof}
    The proof is similar to that of \cite[Theorem 3.2]{DKRZB17}. However, since our notation (such as $F\Omega_X$) is defined slightly differently, we include the proof here for completeness.

    Since the Frobenius--Witt cotangent complex $F\mathbb L_X$ and the cotangent complex $\mathbb L_{X_0/\mathbb F_p}$ are concentrated in degree zero in this case, the fiber sequence in Corollary \ref{cor1.5} coincides with the following short exact sequence:\[0\to \mathcal O_{X_0}\to F\Omega_X^1\xrightarrow[]{r} F_{X_0}^*\Omega_{X_0/\mathbb F_p}\to 0.\]
    Thus the arithmetic Kodaira--Spencer class $\kappa_X$ of $X$ coincides with the class in $\operatorname{Ext}^1_{X_0}(F^*_{X_0}\Omega_{X_0/\mathbb F_p},\mathcal O_{X_0})$ corresponding to the above short exact sequence. The image of this class under the canonical isomorphism\[\operatorname{Ext}^1_{X_0}(F^*_{X_0}\Omega_{X_0/\mathbb F_p},\mathcal O_{X_0})\cong \check H^1(X_0,\mathcal Hom(F^*_{X_0}\Omega_{X_0/\mathbb F_p},\mathcal O_{X_0}))\] can be written as follows. Let $\{U_i\}_{i\in I}$ be an affine open covering of $X$ with $U_{i,n}:=U_i\times_{\operatorname{Spec}\mathbb Z_{(p)}}\operatorname{Spec}\mathbb Z/p^{n+1}$ such that the map $F\Omega_{U_i}\to F^*_{U_{i,0}}\Omega_{U_{i,0}/\mathbb F_p}$ has a section $s_i$. Then the difference $s_i-s_j$ induces a map\[s_{ij}:=s_i-s_j:F^*_{U_{i,0}\cap U_{j,0}}\Omega_{U_{i,0}\cap U_{j,0}/\mathbb F_p}\to \mathcal O_{U_{i,0}\cap U_{j,0}}.\] The collection $\{s_{ij}\}_{i,j\in I}$ defines a class of $\check H^1(X_0,\mathcal Hom(F^*_{X_0}\Omega_{X_0/\mathbb F_p},\mathcal O_{X_0}))$, which is the desired one. On the other hand, we have a Frobenius lift $F_i$ on $U_{i,1}$ defined by \[F_i(x):=x^p+p(w(x)-s_irw(x)).\] Then we have\[(F_i(x)-F_j(x))/p=s_jrw(x)-s_irw(x)=s_j(dx)-s_i(dx)=-s_{ij}(dx),\] where $w$ is the Frobenius--Witt differential and $d$ is the K\"ahler differential.
    This implies \[h_{ij}=-s_{ij}\] for all $i,j\in I$, so we have $h_{X_1}=-\kappa_{X}$.  
\end{proof}

\section{Comparison with the classical deformation theory}\label{sec5}
In this section, we prove that the arithmetic Kodaira--Spencer class constructed in Section \ref{sec3} coincides with the obstruction class defined using the classical theory of the cotangent complex. 

First, we compare the arithmetic Kodaira--Spencer map for animated rings with the one defined by the classical deformation theory of cotangent complexes.

\begin{definition}\label{Def5.2}
Let $A$ be an animated $\mathbb Z_{(p)}$-algebra. We denote by $\xi:\mathbb L_{(A/^Lp)/(\mathbb Z/p^2)}\to A/^Lp[1]$ the $A/^Lp$-linear map classifying the following square-zero extension:\[A/^Lp\to A/^Lp^2\to A/^Lp\to .\] 
\end{definition}

\begin{definition}\label{Def5.3}
    Let $\lambda:F^*_{A/^Lp}\mathbb L_{(A/^Lp)/(\mathbb Z/p^2)}\to A/^Lp[1]$ be the map adjoint to the map $\xi\circ dF-F\circ \xi$ defined by the difference of the following two compositions:\[\xi\circ dF:L_{(A/^Lp)/(\mathbb Z/p^2)}\xrightarrow[]{dF_{A/^Lp}}L_{(A/^Lp)/(\mathbb Z/p^2)}\xrightarrow[]{\xi}A/^Lp[1],\]
    \[F\circ \xi:L_{(A/^Lp)/(\mathbb Z/p^2)}\xrightarrow[]{\xi}A/^Lp[1]\xrightarrow[]{F_{A/^Lp}[1]}A/^Lp[1].\]
\end{definition}
The main theorem in this section is the following:
\begin{theorem}\label{thm5.4}
    Let $A$ be an animated $\mathbb Z_{(p)}$-algebra. Then the map $\lambda:F^*_{A/^Lp}\mathbb L_{(A/^Lp)/(\mathbb Z/p^2)}\to A/^Lp[1]$ is canonically equivalent to the following composition:\[F^*_{A/^Lp}\mathbb L_{(A/^Lp)/(\mathbb Z/p^2)}\xrightarrow[]{\mathrm{can}} F^*_{A/^Lp}\mathbb L_{(A/^Lp)/(\mathbb Z/p)}\xrightarrow[]{\kappa_A} A/^Lp[1].\]
\end{theorem}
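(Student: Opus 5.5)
Both sides are natural transformations between functors of $A\in\operatorname{Ani}(\operatorname{Ring}_{\mathbb Z_{(p)}})$ that preserve sifted colimits. The source $A\mapsto F^*_{A/^Lp}\mathbb L_{(A/^Lp)/(\mathbb Z/p^2)}$ and target $A\mapsto A/^Lp[1]$ are left Kan extended from polynomial algebras, and so is $F\mathbb L_A$ by \cite{Shi26}. So I will reduce to polynomial rings $P=\mathbb Z_{(p)}[x_1,\dots,x_n]$ (or their localizations) and then extend by animation. To get a \emph{canonical} equivalence, not just one objectwise, I will use that both $\lambda$ and $\kappa_A\circ\mathrm{can}$ are natural maps of functors $\operatorname{Poly}_{\mathbb Z_{(p)}}\to\mathcal D$ that land in the derived $\infty$-category. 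The source $F^*_{P/p}\mathbb L_{(P/p)/(\mathbb Z/p^2)}$ is not discrete, since $\mathbb L_{\mathbb F_p/(\mathbb Z/p^2)}$ contributes $\pi_1\simeq (p)/(p^2)$ and higher terms. So I will instead use the transitivity triangle $P/p\otimes\mathbb L_{\mathbb F_p/(\mathbb Z/p^2)}\to \mathbb L_{(P/p)/(\mathbb Z/p^2)}\to \mathbb L_{(P/p)/\mathbb F_p}$. The first step is to show that $\lambda$ vanishes on the first term, compatibly with naturality. That vanishing is what makes $\lambda$ factor through $\mathrm{can}$.

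The factorization step goes as follows. On the base $\mathbb Z_{(p)}$ itself (i.e.\ $A=\mathbb Z_{(p)}$), the map $F$ on $\mathbb F_p$ is the identity, and the square-zero extension $\mathbb Z/p^2\to\mathbb F_p$ is compatible with the Frobenius lift $\mathrm{id}$ of $\mathbb Z/p^2$. The defect $\xi\circ dF-F\circ\xi$ measures precisely the failure of $F$ to lift to the square-zero extension, as a map of extensions. So it vanishes canonically for $\mathbb Z_{(p)}$, and by base change the restriction of $\lambda$ along $P/p\otimes\mathbb L_{\mathbb F_p/(\mathbb Z/p^2)}$ vanishes naturally in $P$. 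Since $\mathbb L_{(P/p)/\mathbb F_p}=\Omega_{(P/p)/\mathbb F_p}$ is projective, the space of natural factorizations is controlled by $\operatorname{Hom}$ out of a discrete projective into $[1]$-shifted objects. This makes the resulting $\lambda'\colon F^*\Omega_{(P/p)/\mathbb F_p}\to P/p[1]$ essentially unique, and likewise for any natural homotopy.

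It then remains to compare $\lambda'$ with $\kappa_P$ as natural maps $F^*\Omega_{(P/p)/\mathbb F_p}\to P/p[1]$ on polynomial rings. Both are elements of $\operatorname{Ext}^1$ of discrete modules, and in fact their restriction to each $P$ is zero, since $P_1$ admits a Frobenius lift (by Theorem~\ref{thm3.2} for $\kappa$). Objectwise comparison is therefore vacuous, and the content lies in the natural $\infty$-categorical data, i.e.\ in the connecting maps as natural transformations. I will therefore compare the underlying extensions functorially in $P$. The fiber sequence of Definition~\ref{def5.1} has cofiber-defining term $F\mathbb L_P=F\Omega_P$, discrete. On the other side, $\lambda'$ is represented by the extension of $F^*\Omega_{(P/p)/\mathbb F_p}$ by $P/p$ obtained from the pushout of the square-zero extension $P/p^2\to P/p$ along $F$, pulled back along $F$. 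Concretely, this is the module of pairs $(\text{local lift of }F\text{ on }P/p^2,\ \text{differential})$. I will construct a natural isomorphism of extensions between $F\Omega_P\otimes P/p$ and this module, sending $w(x)$ to the class of the "$\tfrac1p(F-(-)^p)$"-torsor element attached to $x$. This is exactly the dictionary from the proofs of Theorem~\ref{thm3.2} and Remark~\ref{remA.1}, now made functorial and without choosing a lift. Proposition~\ref{pro2.7} and Remark~\ref{rem1.6} will identify the first map ($1\mapsto w(p)$) compatibly. The sign conventions will be checked on $\mathbb Z_{(p)}[x]$.

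The main obstacle is making this identification of extensions natural on all of $\operatorname{Poly}$, and not just objectwise. Objectwise both classes vanish, and all the content lies in coherence. I would first try to describe both extensions as functors $\operatorname{Poly}\to\operatorname{Fun}(\Delta^1,\operatorname{Mod})$ valued in discrete short exact sequences. This 1-categorical, discrete setting would reduce coherence to checking a natural isomorphism of ordinary functors, which then animates to the claimed canonical equivalence for all animated $A$.
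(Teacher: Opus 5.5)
\textbf{Verdict: genuine gap.} Your plan identifies the right difficulty, but the step that carries the whole theorem is only described, not carried out.

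\textbf{What is fine.} The reduction to polynomial rings works. So does the natural factorization $\lambda\simeq\lambda'\circ\mathrm{can}$: by naturality along $\mathbb Z_{(p)}\to P$, the restriction of $\lambda_P$ to $F^*_{P/p}(P/p\otimes\mathbb L_{\mathbb F_p/(\mathbb Z/p^2)})$ is the base change of $\lambda_{\mathbb Z_{(p)}}=\xi-\xi\simeq0$. You also correctly observe that $\lambda'$ and $\kappa_P$ both vanish objectwise, so all the content lies in the coherent naturality.

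\textbf{A wrong justification.} Uniqueness of $\lambda'$ does not come from projectivity of $\Omega_{(P/p)/\mathbb F_p}$. Projectivity alone still leaves $\pi_1\operatorname{Map}(F^*_{P/p}\Omega_{(P/p)/\mathbb F_p},P/p[1])=\operatorname{Hom}(F^*_{P/p}\Omega_{(P/p)/\mathbb F_p},P/p)\neq0$, so your ``likewise for any natural homotopy'' fails. Uniqueness instead comes from $\mathbb L_{\mathbb F_p/(\mathbb Z/p^2)}$ being concentrated in homological degrees $\geq1$: the nullhomotopies then form a torsor under a contractible space.

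\textbf{The gap.} Exhibiting the functorial extension classified by $\lambda'$, and a natural isomorphism between it and $0\to P/p\to F\Omega_P\to F^*_{P/p}\Omega_{(P/p)/\mathbb F_p}\to0$, \emph{is} the theorem, and you only sketch it. The sketch is also inaccurate in two ways.
\begin{enumerate}
\item ``The pushout along $F$, pulled back along $F$'' conflates the two extensions whose \emph{difference} $\lambda$ classifies.
\item A ``module of pairs (local lift, differential)'' is not a defined functor on polynomial rings. Lifts form a torsor, and they do not transport along arbitrary maps $P\to Q$.
\end{enumerate}

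What would actually be required is the following.
\begin{enumerate}
\item Show that $\xi\circ dF$ and $F\circ\xi$ classify the square-zero $\mathbb Z/p^2$-algebra extensions $E_1=P/p\times_{F,P/p}P/p^2$ and $E_2=P/p^2\sqcup_{pP/p^2,F}F_*(P/p)$ of $P/p$ by $F_*(P/p)$.
\item Observe that their Baer difference has $p=0$, so (the adjoint of) $\lambda'$ is its classifying map as an $\mathbb F_p$-algebra extension.
\item Identify the fiber of that map with the conormal module $\Omega_{(E_1-E_2)/\mathbb F_p}\otimes P/p$, naturally in $P$ and compatibly with the $\infty$-categorical square-zero dictionary.
\item Build, via the universal property of $F\Omega_P$, a natural isomorphism with $F\Omega_P$, and determine its effect on both ends.
\end{enumerate}
None of this is done. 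The sign also cannot be ``checked on $\mathbb Z_{(p)}[x]$'', since both classes are zero there; it can only be read off from such a natural isomorphism.

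\textbf{How the paper avoids this.} The paper sidesteps the coherence problem by changing the test objects. After reducing to $\mathbb Z_{(p)}[\underline X]$, it uses fpqc descent (Lemma \ref{lem5.5}) along the \v{C}ech conerve of $\mathbb Z_{(p)}[\underline X]\to\mathbb Z_{(p)}[\underline X^{1/p^\infty}]$. The terms of this conerve are $\mathbb Z_{(p)}[\underline X^{1/p^\infty}]/I$ with $I$ Koszul regular. For such $A$, $F^*_{A/p}\mathbb L_{(A/p)/(\mathbb Z/p^2)}$ is concentrated in homological degrees $\geq1$. Hence maps from it to $A/p[1]$ form a discrete set, determined by the induced maps on $\pi_1\cong(p,\overline I)/(p,\overline I)^2$, and objectwise comparison becomes both meaningful and sufficient. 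The comparison is then a direct computation with the $\delta$-structure:
\begin{itemize}
\item Proposition \ref{pro2.7} and $w(x)=\delta(x)w(p)$ give $\kappa_A\colon\overline y\mapsto\overline{\delta(y)}$.
\item The explicit description of $\xi$ on $\pi_1$, together with a Frobenius lift, gives $\lambda\colon\overline{px+y}\mapsto\overline{\delta(y)}$.
\end{itemize}
Your route might be completable, but as written its decisive step is a plan rather than a proof.
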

\begin{proof}
    Since all terms in the maps under consideration commute with sifted colimits on $A\in \operatorname{Ani(Ring)}_{\mathbb Z_{(p)}/}$ and all maps are functorial, we may assume $A$ is a finitely generated polynomial $\mathbb Z_{(p)}$-algebra $\mathbb Z_{(p)}[\underline{X}]$.

    Consider the canonical map \[A:=\mathbb Z_{(p)}[\underline{X}]\to \mathbb Z_{(p)}[\underline{X}^{1/p^{\infty}}]=:B\]
    Let $B^{\bullet}$ be the \v{C}ech conerve of the map $A\to B$. Since each term in the maps under consideration satisfies fpqc descent (see Lemma \ref{lem5.5}), the assertion reduces to the case $A=B_n$ ($n\ge0$). 

    Therefore, we may assume $A=\mathbb Z_{(p)}[\underline{X}^{1/p^{\infty}}]/I$, where $I$ is generated by a Koszul regular sequence in $\mathbb Z_{(p)}[\underline{X}^{1/p^{\infty}}]$. In this case, the cotangent complex $\mathbb L_{(A/p)/(\mathbb Z/p^2)}$ is concentrated in homological degree $\ge 1$, so it suffices to check that the maps \[\kappa_A\circ \mathrm{can},\,\lambda:F_{A/p}^*\mathbb L_{(A/p)/(\mathbb Z/p^2)}\to A/p[1]\] induce the same map $F_{A/p}^*\pi_1(\mathbb L_{(A/p)/(\mathbb Z/p^2)})\to A/p$ between classical $A/p$-modules.

    First, we will compute the map $\kappa_A$. Consider the following diagram of fiber sequences in $\mathcal D(A/p)$:\[
\begin{tikzcd}
F\mathbb L_{\mathbb Z_{(p)}}\otimes^L_{\mathbb F_p}A/p \arrow[r, ] \arrow[d, ] 
    & F\mathbb L_A \arrow[r, ] \arrow[d, ] 
    & F_{A/p}^*\mathbb L_{(A/p)/\mathbb F_p}  \arrow[d, ]\\
F\mathbb L_{\mathbb Z_{(p)}[\underline{X}^{1/p^{\infty}}]}\otimes^L_{\mathbb F_p[\underline{X}^{1/p^{\infty}}]}A/p \arrow[r, ] 
    & F\mathbb L_A \arrow[r, ] 
    &  F_{A/p}^*\mathbb L_{(A/p)/\mathbb F_p[\underline{X}^{1/p^{\infty}}]} 
\end{tikzcd}
\]
Since $\kappa_A$ is the connecting map of the upper fiber sequence and the vertical maps are equivalences in the above diagram, $\kappa_A$ can be computed as the connecting map of the lower fiber sequence. Thus, Proposition \ref{pro2.7} ensures that the map $\kappa_A$ can be identified with the following map:\[F_{A/p}^*(\overline{I}/\overline{I}^2)\xrightarrow[]{\overline {x}\otimes 1\mapsto w(x)\otimes 1}F\Omega_{\mathbb Z_{(p)}[\underline{X}^{1/p^{\infty}}]}\otimes _{\mathbb F_p[\underline{X}^{1/p^{\infty}]}}A/p\cong A/p\cdot w(p),\] where $\overline I$ is the image of the ideal $I$ under the canonical surjection $A\twoheadrightarrow A/p$. Since $\mathbb Z_{(p)}[\underline{X}^{1/p^{\infty}}]$ has a delta-structure $\delta$, the Frobenius--Witt differential $w$ can be written as $w(x)=\delta(x)w(p)$. Therefore, the shift of the map $\kappa_A$ can be written as the following:
\[F_{A/p}^*(\overline I/\overline I^2)\to A/p:\overline x\otimes 1\to \overline {\delta(x)},\] where $\overline x$ is an element of $\overline{I}/\overline I^2$, $x\in I$ is a lift of $\overline x$ and $\overline{\delta(x)}$ is the image of $\delta(x)\in \mathbb Z_{(p)}[\underline{X}^{1/p^{\infty}}]$ in $A/p$.

Next, we will compute the map $\lambda$. To do so, we compute the map $\xi$ (Definition \ref{Def5.2}). We have a surjective ring map $\mathbb Z/p^2[\underline{X}^{1/p^{\infty}}]\twoheadrightarrow A/p$ with kernel $(\overline I,p)\subseteq Z/p^2[\underline{X}^{1/p^{\infty}}]$, where $\overline I$ is the image of $I\subseteq Z_{(p)}[\underline{X}^{1/p^{\infty}}]$ in $Z/p^2[\underline{X}^{1/p^{\infty}}]$. Thus we have a canonical equivalence $\pi_1(\mathbb L_{(A/p)/(\mathbb Z/p^2)})\cong (p,\overline I)/(p,\overline I)^2$. Take a surjective ring map $Q\twoheadrightarrow Z/p^2[\underline{X}^{1/p^{\infty}}]$ from a polynomial $\mathbb Z/p^2$-algebra $Q$. Then the composition \[Q\twoheadrightarrow Z/p^2[\underline{X}^{1/p^{\infty}}]\twoheadrightarrow A/p\] is also a surjective ring map. Denote its kernel by $K$. We have a lift \[Q\twoheadrightarrow Z/p^2[\underline{X}^{1/p^{\infty}}]\twoheadrightarrow A/p^2\] of the above surjective ring map along the canonical surjection $A/p^2\twoheadrightarrow A/p$. By construction, this map induces a map \[K/K^2\twoheadrightarrow (p,\overline I)/(p,\overline I)^2\to pA/p^2A:=\operatorname{ker}(A/p^2\twoheadrightarrow A/p).\] By \cite[Tag 0GPU]{Sta26}, the induced map \[(p,\overline I)/(p,\overline I)^2\to pA/p^2A:\overline{px+y}\mapsto p\overline y,\] where $x\in Z/p^2[\underline{X}^{1/p^{\infty}}]$ and $y\in \overline I$, corresponds to the map induced by $\xi$. Therefore, we can write the shift of the map $\xi$ as the composition\[\mathbb L_{(A/p)/(\mathbb Z/p^2)}[-1]\xrightarrow[]{\mathrm{can}}\pi_1(\mathbb L_{(A/p)/(\mathbb Z/p^2)})\cong (p,\overline I)/(p,\overline I)^2\xrightarrow[]{px+y\mapsto y}A/p,\] where $x\in \mathbb Z/p^2[\underline{X}^{1/p^{\infty}}]$ and $y\in \overline I$. Using this computation, we will determine the map $\lambda$ (Definition \ref{Def5.3}). Let $\phi$ be a Frobenius lift on $Z/p^2[\underline{X}^{1/p^{\infty}}]$. Then the map $\pi_1(dF_{A/p}):\pi_1(\mathbb L_{(A/p)/\mathbb Z/p^2})\to \pi_1(\mathbb L_{(A/p)/\mathbb Z/p^2})$ induced by the Frobenius map $F:A/p\to A/p$ can be written as \[(p,\overline I)/(p,\overline I)^2\xrightarrow[]{\overline x\mapsto \overline{\phi(x)}}(p,\overline I)/(p,\overline I)^2.\]
Since we have \begin{align*}
    \overline {\phi(px+y)}&=\overline{p\phi(x)+\phi(y)}\\&=p\overline{x^p}+\overline{y^p}+p\overline {\delta(y)},
\end{align*} where $\delta$ is a delta-structure on $\mathbb Z_{(p)}[\underline{X}^{1/p^{\infty}}]$ compatible with the Frobenius lift on $\mathbb Z/p^2[\underline{X}^{1/p^{\infty}}]$. Therefore the composition $\pi_1(\xi)\circ dF$ (in the sense of Definition \ref{Def5.3}) can be written as the following:
\[(p,\overline I)/(p,\overline I)^2\to A/p:\overline{px+y}\mapsto \overline{x^p}+\overline{\delta(y)},\] where $x\in \mathbb Z_{(p)}[\underline{X}^{1/p^{\infty}}]$ and $y\in I$. On the other hand, the composition $F\circ \pi_1(\xi)$ (in the sense of Definition \ref{Def5.3}) can be written as the following:\[(p,\overline I)/(p,\overline I)^2\to A/p:\overline{px+y}\mapsto \overline{x^p},\] where $x\in \mathbb Z_{(p)}[\underline{X}^{1/p^{\infty}}]$ and $y\in I$. Combining them, the shift of the map $\lambda$ can be written as the following composition:
\[\lambda[-1]:\mathbb L_{(A/p)/(\mathbb Z/p^2)}[-1]\xrightarrow[]{\mathrm{can}}\pi_1(\mathbb L_{(A/p)/(\mathbb Z/p^2)})\cong (p,\overline I)/(p,\overline I)^2\xrightarrow[]{\overline{px+y}\mapsto \overline{\delta(y)}}A/p,\] where $x\in \mathbb Z_{(p)}[\underline{X}^{1/p^{\infty}}]$ and $y\in I$. This map induces \[F_{A/p}^*(\overline I/\overline I^2)\to A/p:\overline y\otimes 1\mapsto \overline{\delta(y)},\] where $y\in I$. Therefore, we get the conclusion.
\end{proof}
The following lemma is used in the above proof.
\begin{lemma}\label{lem5.5}
    The functor\[\operatorname{Flat.Ring}_{\mathbb Z_{(p)}/}\to \mathcal D(\mathbb F_p):A\mapsto F_{A/p}^*\mathbb L_{(A/p)/\mathbb F_p}\] from the category of flat $\mathbb Z_{(p)}$-algebras to the derived $\infty$-category of $\mathbb F_p$-complexes satisfies fpqc descent.
\end{lemma}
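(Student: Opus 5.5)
The functor $A\mapsto F_{A/p}^*\mathbb L_{(A/p)/\mathbb F_p}$ factors as a composition of three operations, and I would check descent for each piece. Set $\bar A:=A/p$. Since $A$ is flat over $\mathbb Z_{(p)}$, this is an honest ring and $A\mapsto \bar A$ is base change. If $A\to B$ is faithfully flat, then so is $\bar A\to\bar B$, and the \v{C}ech conerve of $A\to B$ reduces mod $p$ to the \v{C}ech conerve of $\bar A\to\bar B$, because tensor products commute with reduction mod $p$. So it suffices to show that
\[\operatorname{Ring}_{\mathbb F_p}\to\mathcal D(\mathbb F_p),\qquad R\mapsto F_R^*\mathbb L_{R/\mathbb F_p}=\mathbb L_{R/\mathbb F_p}\otimes^L_{R,F_R}R\]
satisfies fpqc descent. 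Concretely, for a faithfully flat map $R\to S$ of $\mathbb F_p$-algebras with \v{C}ech conerve $S^\bullet$, we want $F_R^*\mathbb L_R\simeq\lim_{\Delta}F_{S^n}^*\mathbb L_{S^n}$.

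\textbf{Step 1: naturality.} The relevant maps are natural. Functoriality of Frobenius gives $F^*_{S^n}\mathbb L_{S^n}\simeq(\mathbb L_{S^n})\otimes_{S^n,F}S^n$, and the map $\mathbb L_{R}\otimes_R S^n\to\mathbb L_{S^n}$ has cofiber $\mathbb L_{S^n/R}$.

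\textbf{Step 2: use Bhatt's flat descent and extend scalars.} By Bhatt's theorem (\cite[Corollary 2.7]{Bha12}, or the $p$-adic derived de Rham paper), the functors $S\mapsto\wedge^i\mathbb L_{S/R}$ satisfy fpqc descent on $R$-algebras. In particular $\mathbb L_{S^\bullet}$ is a limit diagram. I would then apply the transitivity triangle
\[\mathbb L_R\otimes_R S^n\to\mathbb L_{S^n}\to\mathbb L_{S^n/R}.\]
The first term is the quasi-coherent module $\mathbb L_R$ pulled back, so it satisfies descent by faithfully flat descent for quasi-coherent complexes (Lurie). The third term satisfies descent by Bhatt, and moreover its totalization vanishes, since descent for $\mathbb L_{-/R}$ together with $\mathbb L_{R/R}=0$ forces $\lim\mathbb L_{S^\bullet/R}=0$.

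\textbf{Step 3: Frobenius twist.} Next I would handle the Frobenius twist. The object $F^*_{S^n}\mathbb L_{S^n}$ is the base change of $\mathbb L_{S^n}$ along $F:S^n\to S^n$. For $R\to S$, Frobenius $F_S$ factors through the relative Frobenius $S\otimes_{R,F_R}R\to S$, which is not flat in general, so we cannot just argue via flat base change. Instead, apply the same transitivity argument to the twisted sequence
\[F_R^*\mathbb L_R\otimes_R S^n\to F_{S^n}^*\mathbb L_{S^n}\to F_{S^n}^*\mathbb L_{S^n/R}.\]
The first term is descent of the fixed module $F_R^*\mathbb L_R$, pulled back, along the faithfully flat map $R\to S$. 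For the third term, $F^*_{S}\mathbb L_{S/R}=\mathbb L_{S/R}\otimes_{S,F}S$. Writing $\mathbb L_{S/R}$ as a sifted colimit over polynomial resolutions reduces this to the functor $S\mapsto\Omega^1_{P/R}\otimes_{P,F}S$.

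\textbf{Main obstacle.} The hard part is showing that $\lim_\Delta F^*_{S^\bullet}\mathbb L_{S^\bullet/R}=0$. I would handle it the way Bhatt does: using the Hodge-filtration and conjugate-filtration picture, the Cartier isomorphism identifies the graded pieces of the conjugate filtration of $\mathrm{dR}_{S/R}$ with $\wedge^i\mathbb L_{S^{(1)}/R}[-i]$. Here $S^{(1)}=S\otimes_{R,F}R$, which is flat over $R$ and faithfully flat-descent friendly. That gives descent for $\mathbb L_{S^{(1)}/R}$ base changed along $S^{(1)}\to S$. Alternatively, a more elementary route: $F^*_S\mathbb L_{S/R}$ is an $S$-module via the second factor, and the cosimplicial object $F^*\mathbb L_{S^\bullet/R}$ is a cosimplicial $S^\bullet$-module that becomes split (it acquires an extra degeneracy) after base change $R\to S$. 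By faithful flatness of $R\to S$, the limit vanishes. I expect the extra-degeneracy argument to be the cleanest, with the only check being that the Frobenius twist is compatible with the contracting homotopy, which holds because Frobenius is natural.
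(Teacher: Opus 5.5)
Your route is genuinely different from the paper's. The paper's proof is two lines. It uses the natural fiber sequence $A/p\to F\mathbb L_A\to F^*_{A/p}\mathbb L_{(A/p)/\mathbb F_p}$ together with fpqc descent for $A\mapsto A/p$ and for $A\mapsto F\mathbb L_A$ (the latter is \cite[Theorem 5.9]{Shi26}), and concludes because totalizations preserve cofiber sequences in a stable $\infty$-category.

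You instead reduce mod $p$ and show directly that $R\mapsto F_R^*\mathbb L_{R/\mathbb F_p}$ is an fpqc sheaf on all $\mathbb F_p$-algebras. Your tools are the twisted transitivity sequence $F_R^*\mathbb L_R\otimes_RS^n\to F_{S^n}^*\mathbb L_{S^n}\to F_{S^n}^*\mathbb L_{S^n/R}$ and the vanishing of $\lim_\Delta F^*_{S^\bullet}\mathbb L_{S^\bullet/R}$. Your identification of the first term is correct, since Frobenius commutes with $R\to S^n$. This approach is independent of Frobenius--Witt theory and gives a statement for arbitrary $\mathbb F_p$-algebras; one could even run the fiber sequence backwards to recover descent for $F\mathbb L_A$. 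The paper's proof is much shorter but rests entirely on the descent theorem of \cite{Shi26}.

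The extra-degeneracy route is the right one, and your splitting claim holds. Flat base change gives $F^*_{S^n}\mathbb L_{S^n/R}\otimes_RS\simeq F^*_{S^n\otimes_RS}\mathbb L_{(S^n\otimes_RS)/S}$ naturally in $n$. Moreover, $S^\bullet\otimes_RS$ is the \v{C}ech conerve of $S\to S\otimes_RS$, which has an $S$-algebra retraction, so after base change you get a split object augmented by $F_S^*\mathbb L_{S/S}=0$.

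What is missing is a justification of ``by faithful flatness the limit vanishes.'' $\operatorname{Tot}$ does not commute with $-\otimes_RS$ for cosimplicial objects that are merely connective. For instance, the connective cosimplicial object in $\mathcal D(\mathbb Z)$ whose partial totalizations form the split tower $\operatorname{Tot}_n=\bigoplus_{k\le n}\mathbb Z$ has $\operatorname{Tot}\simeq\prod_k\mathbb Z$, and $(\prod_k\mathbb Z)\otimes\mathbb Q\neq\prod_k\mathbb Q$. So you cannot simply totalize after base change and invoke conservativity.

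Argue homotopy-wise instead:
\begin{enumerate}
\item For each $t$, $\pi_t(F^*_{S^\bullet}\mathbb L_{S^\bullet/R})$ is a discrete cosimplicial $R$-module. Its base change along the flat map $R\to S$ is $\pi_t$ of the split object.
\item So its normalized cochain complex becomes contractible after $-\otimes_RS$, and is therefore exact.
\item Hence the Bousfield--Kan $E_2$-page vanishes. Each $\pi_k\operatorname{Tot}_n$ is then concentrated in filtration $n$, so every transition map $\pi_k\operatorname{Tot}_n\to\pi_k\operatorname{Tot}_{n-1}$ is zero.
\item Thus $\lim$ and $\lim^1$ vanish, and $\operatorname{Tot}=0$.
\end{enumerate}

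Two other pieces should be dropped. Your conjugate-filtration alternative does not work as written: descent for $S\mapsto\mathbb L_{S^{(1)}/R}$ does not survive base change along the non-flat relative Frobenius $S^{(1)}\to S$, which is exactly the difficulty. The reduction to $\Omega^1_{P/R}\otimes_{P,F}S$ in Step 3 is unnecessary, and it would not be legitimate anyway, since totalizations do not commute with sifted colimits.
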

\begin{proof}
    For every $A\in \operatorname{Flat.Ring}_{\mathbb Z_{(p)}/}$, there is a natural fiber sequence\[A/p\to F\mathbb L_A\to F_{A/p}^*\mathbb L_{(A/p)/\mathbb F_p}\to.\] Since the functors $A\mapsto A/p(\simeq A/^Lp)$ and $A\mapsto F\mathbb L_{A}$ satisfy fpqc descent (see \cite[Theorem 5.9]{Shi26}), the functor $A\mapsto F^*_{A/p}\mathbb L_{(A/p)/\mathbb F_p}$ also satisfies fpqc descent.
\end{proof}
\begin{definition}\label{def5.6}
    Let $X_1$ be a flat $\mathbb Z/p^2$-scheme. Define the element \[\lambda_{X_1}\in \operatorname{Ext}^1_{X_0}(F_{X_0}^*\mathbb L_{X_0/(\mathbb Z/p^2)},\mathcal O_{X_0})\] by gluing the maps in Definition \ref{Def5.3}.
\end{definition}

\begin{coro}\label{cor5.7}
    Let $X$ be a flat $\mathbb Z_{(p)}$-scheme. Write $X_0:=X\times_{\operatorname{Spec}\mathbb Z_{(p)}}\operatorname{Spec}\mathbb F_p$. Then the arithmetic Kodaira--Spencer class $\kappa_X\in \operatorname{Ext}^1_{X_0}(F^*_{X_0}\mathbb L_{X_0/\mathbb F_p},\mathcal O_{X_0})$ can be characterized by the property that it is targeted by the element $\lambda_{X_1}\in \operatorname{Ext}_{X_0}^1(F^*_{X_0\mathbb L_{X_0/(\mathbb Z/p^2)}},\mathcal O_{X_0})$ (Definition \ref{def5.6}). 
\end{coro}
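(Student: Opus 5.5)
Corollary \ref{cor5.7} says that $\kappa_X$ is the image of $\lambda_{X_1}$ under the map
\[\operatorname{Ext}^1_{X_0}(F^*_{X_0}\mathbb L_{X_0/\mathbb F_p},\mathcal O_{X_0})\to \operatorname{Ext}^1_{X_0}(F^*_{X_0}\mathbb L_{X_0/(\mathbb Z/p^2)},\mathcal O_{X_0}),\]
given by precomposition with $F^*_{X_0}(\mathrm{can})$, where $\mathrm{can}:\mathbb L_{X_0/(\mathbb Z/p^2)}\to \mathbb L_{X_0/\mathbb F_p}$. It also asks that $\kappa_X$ be the \emph{unique} preimage of $\lambda_{X_1}$. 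The plan therefore has two parts: first globalize Theorem \ref{thm5.4} to obtain $\kappa_X\circ F^*_{X_0}(\mathrm{can})=\lambda_{X_1}$, and then show that precomposition with $F^*_{X_0}(\mathrm{can})$ is injective on $\operatorname{Ext}^1$.

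For the first part I would work at the level of objects of $\mathcal D_{qc}(X_0)=\lim_U\mathcal D(\mathcal O_X(U)/p)$. For each affine open $U=\operatorname{Spec}A$, the ring $A$ is flat over $\mathbb Z_{(p)}$, so $A/^Lp\simeq A/p$. Theorem \ref{thm5.4} then gives a canonical equivalence $\lambda_A\simeq \kappa_A\circ\mathrm{can}$ of maps in $\mathcal D(A/p)$. Both sides are built from constructions that are functorial in $A$: the fiber sequence of \cite[Theorem 5.3]{Shi26}, the square-zero extension class $\xi$, and the Frobenius. The equivalence is produced canonically by reduction to polynomial algebras together with descent. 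Hence these equivalences are compatible with the Zariski localization maps, and they glue to an equivalence of morphisms in $\mathcal D_{qc}(X_0)$. This is exactly how $\lambda_{X_1}$ is defined in Definition \ref{def5.6}, and it is how $\kappa_X$ is obtained from Definition \ref{Def1.2} and Theorem \ref{thm1.4}. Passing to $\pi_0$ of mapping spaces yields $\kappa_X\circ F^*_{X_0}(\mathrm{can})=\lambda_{X_1}$. I expect this coherence step, namely checking that the equivalence of Theorem \ref{thm5.4} is natural in $A$ rather than merely objectwise, to be the main obstacle. I would handle it by observing that every step of that proof is performed functorially on $\operatorname{Ani(Ring)}_{\mathbb Z_{(p)}/}$ via left Kan extension from polynomial algebras.

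For injectivity, I would use the transitivity sequence for $\mathbb Z/p^2\to\mathbb F_p\to \mathcal O_{X_0}$:
\[\mathcal O_{X_0}\otimes_{\mathbb F_p}\mathbb L_{\mathbb F_p/(\mathbb Z/p^2)}\to \mathbb L_{X_0/(\mathbb Z/p^2)}\xrightarrow{\mathrm{can}}\mathbb L_{X_0/\mathbb F_p}\to.\]
Apply $F^*_{X_0}$ and then $\operatorname{Hom}(-,\mathcal O_{X_0}[1])$. The kernel of precomposition by $F^*_{X_0}(\mathrm{can})$ on $\operatorname{Ext}^1$ is the image of
\[\operatorname{Hom}(\mathcal O_{X_0}\otimes\mathbb L_{\mathbb F_p/(\mathbb Z/p^2)}[1],\mathcal O_{X_0}[1]).\]
Since $\mathbb L_{\mathbb F_p/(\mathbb Z/p^2)}$ is $1$-connective with $\pi_1=(p)/(p^2)$, this Hom group is $\operatorname{Hom}(\mathcal O_{X_0}\otimes\mathbb L_{\mathbb F_p/(\mathbb Z/p^2)},\mathcal O_{X_0})=0$. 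Indeed, a map from a $1$-connective object to a discrete one vanishes. So precomposition by $F^*_{X_0}(\mathrm{can})$ is injective on $\operatorname{Ext}^1$, and $\kappa_X$ is the unique element mapping to $\lambda_{X_1}$, which is the claimed characterization.
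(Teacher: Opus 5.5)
Your proposal is correct and follows the paper's route: the paper deduces the corollary in one line from Theorem \ref{thm5.4} (implicitly glued over affine opens, as you do), and your extra argument that precomposition with $F^*_{X_0}(\mathrm{can})$ is injective on $\operatorname{Ext}^1$, because $\mathcal O_{X_0}\otimes_{\mathbb F_p}\mathbb L_{\mathbb F_p/(\mathbb Z/p^2)}$ is $1$-connective, correctly supplies the uniqueness behind the word ``characterized'', which the paper leaves unstated. One small slip: your opening sentence has the direction reversed ($\lambda_{X_1}$ is the image of $\kappa_X$, not conversely), but everything afterwards correctly uses $\kappa_X\circ F^*_{X_0}(\mathrm{can})=\lambda_{X_1}$.
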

\begin{proof}
    This follows from Theorem \ref{thm5.4}.
\end{proof}
\begin{coro}\label{cor5.8}
    Let $X$ and $Y$ be flat $\mathbb Z_{(p)}$-schemes. Write $X_1:=X\times _{\operatorname{Spec}\mathbb Z_{(p)}}\operatorname{Spec}\mathbb Z/p^2$ and $Y_1:=Y\times _{\operatorname{Spec}\mathbb Z_{(p)}}\operatorname{Spec}\mathbb Z/p^2$. Assume there is an isomorphism $X_1\cong Y_1$ of schemes. Then there is an equivalence $F\mathbb L_X\simeq F\mathbb L_Y$ under the identification $\mathcal D(X_0)\simeq \mathcal D(Y_0)$.
\end{coro}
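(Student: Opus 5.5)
The plan is to realize $F\mathbb L_X$, up to equivalence, as the fiber of a map that is built only from $X_1$. By Corollary~\ref{cor1.5} there is a fiber sequence
\[\mathcal O_{X_0}\to F\mathbb L_X\to F_{X_0}^*\mathbb L_{X_0/\mathbb F_p}\xrightarrow{\kappa_X}\mathcal O_{X_0}[1].\]
Hence $F\mathbb L_X\simeq \operatorname{fib}(\kappa_X)$. The task therefore reduces to showing that the map $\kappa_X$, viewed as a morphism in $\mathcal D_{qc}(X_0)$, depends only on $X_1$ and is transported to $\kappa_Y$ by the given isomorphism $X_1\cong Y_1$. Here $\mathcal D(X_0)\simeq\mathcal D(Y_0)$ is induced by the isomorphism $X_0\cong Y_0$ obtained by reducing $X_1\cong Y_1$ modulo $p$.

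For this I use Theorem~\ref{thm5.4} and Corollary~\ref{cor5.7}. The map $\lambda$ of Definition~\ref{Def5.3} is constructed from two ingredients: the class $\xi$ of the square-zero extension $A/p\to A/p^2\to A/p$, and the absolute Frobenius of $A/p$. Both are intrinsic to the $\mathbb Z/p^2$-algebra $A/p^2$. For schemes, $\lambda_{X_1}$ is obtained by gluing these, and it is therefore functorial for isomorphisms of flat $\mathbb Z/p^2$-schemes. Thus the isomorphism $X_1\cong Y_1$ carries $\lambda_{X_1}$ to $\lambda_{Y_1}$ and $\mathbb L_{X_0/(\mathbb Z/p^2)}$ to $\mathbb L_{Y_0/(\mathbb Z/p^2)}$. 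It also carries $F_{X_0}^*\mathbb L_{X_0/\mathbb F_p}$ to $F_{Y_0}^*\mathbb L_{Y_0/\mathbb F_p}$, compatibly with the canonical maps $\mathrm{can}$.

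Theorem~\ref{thm5.4} gives $\lambda\simeq\kappa\circ\mathrm{can}$. To recover $\kappa$ itself from $\lambda$, I use the transitivity sequence for $\mathbb Z/p^2\to\mathbb F_p\to A/p$:
\[\mathbb L_{\mathbb F_p/(\mathbb Z/p^2)}\otimes A/p\to \mathbb L_{(A/p)/(\mathbb Z/p^2)}\xrightarrow{\mathrm{can}}\mathbb L_{(A/p)/\mathbb F_p}.\]
Precomposition with $\mathrm{can}$ is not injective on mapping spaces in general, because $\mathbb L_{\mathbb F_p/(\mathbb Z/p^2)}$ has $\pi_1=\mathbb F_p$ and higher homotopy. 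This is the main obstacle, and I expect it to be where the argument needs care.

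I would handle it by working functorially in the affine case rather than at the level of Ext classes. For an animated $\mathbb Z_{(p)}$-algebra $A$, both $\kappa_A$ and the whole fiber sequence of \cite[Theorem 5.3]{Shi26} are functorial in $A$. The construction in Theorem~\ref{thm5.4} reduces, through sifted colimits and \v{C}ech descent, to quasiregular semiperfect-type algebras $A=\mathbb Z_{(p)}[\underline X^{1/p^\infty}]/I$. In that case $\kappa_A$ is given by the explicit formula $\overline x\mapsto\overline{\delta(x)}$, which depends only on the reduction mod $p^2$. I would then argue that $F\mathbb L$, restricted to flat algebras, factors through reduction mod $p^2$ on the level of $\infty$-functors. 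The same descent argument shows that $A\mapsto\operatorname{fib}(\lambda$-induced map$)$ is a functor of $A/p^2$ that agrees with $F\mathbb L_A$.

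Concretely, I would define $G(A_1):=\operatorname{fib}\bigl(F^*\mathbb L_{(A_1/p)/(\mathbb Z/p^2)}\xrightarrow{\lambda}A_1/p[1]\bigr)$, modified by the cofiber of $\mathrm{can}$. Using the canonical equivalence of Theorem~\ref{thm5.4}, I would identify $G(A/p^2)\simeq F\mathbb L_A$ naturally in $A$. Finally, I would glue over affine opens via Zariski descent, as in Definition~\ref{Def1.2}. This yields $F\mathbb L_X\simeq G(X_1)\simeq G(Y_1)\simeq F\mathbb L_Y$.

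If the naturality of the equivalence in Theorem~\ref{thm5.4} is too weak for this, I fall back to the weaker statement obtained from the fiber sequence alone, namely an equivalence $F\mathbb L_X\simeq F\mathbb L_Y$. That fallback only requires $\kappa_X$ to correspond to $\kappa_Y$ as classes, and Corollary~\ref{cor5.7} supplies exactly that, since it characterizes $\kappa_X$ by $\lambda_{X_1}$.
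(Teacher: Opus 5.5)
Your fallback argument is the paper's proof. The paper deduces Corollary~\ref{cor5.8} from Corollary~\ref{cor5.7}: one has $F\mathbb L_X\simeq\operatorname{fib}(\kappa_X)$, and $\kappa_X$ is pinned down by $\lambda_{X_1}$. The class $\lambda_{X_1}$ is built from the extension $\mathcal O_{X_0}\xrightarrow{p}\mathcal O_{X_1}\to\mathcal O_{X_0}$ and the Frobenius of $X_0$, so it is transported to $\lambda_{Y_1}$.

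You should not leave the obstacle you flag unresolved. It is exactly what makes Corollary~\ref{cor5.7} a \emph{characterization}, and it does not occur for flat $X$:
\begin{itemize}
\item In the transitivity triangle for $\mathbb Z\to\mathbb Z/p^2\to\mathbb F_p$, the map $\mathbb L_{(\mathbb Z/p^2)/\mathbb Z}\otimes\mathbb F_p\simeq\mathbb F_p[1]\to\mathbb L_{\mathbb F_p/\mathbb Z}\simeq\mathbb F_p[1]$ sends $p^2$ to $0\in(p)/(p^2)$. Hence $\mathbb L_{\mathbb F_p/(\mathbb Z/p^2)}\simeq\mathbb F_p[1]\oplus\mathbb F_p[2]$.
\item Since the Frobenius of $\mathbb F_p$ is the identity, the fiber of $\mathrm{can}\colon F_{X_0}^*\mathbb L_{X_0/(\mathbb Z/p^2)}\to F_{X_0}^*\mathbb L_{X_0/\mathbb F_p}$ is $\mathcal O_{X_0}[1]\oplus\mathcal O_{X_0}[2]$.
\item Therefore the kernel of $\mathrm{can}^*\colon\operatorname{Ext}^1_{X_0}(F_{X_0}^*\mathbb L_{X_0/\mathbb F_p},\mathcal O_{X_0})\to\operatorname{Ext}^1_{X_0}(F_{X_0}^*\mathbb L_{X_0/(\mathbb Z/p^2)},\mathcal O_{X_0})$ is the image of $\operatorname{Hom}_{X_0}(\mathcal O_{X_0}[1]\oplus\mathcal O_{X_0}[2],\mathcal O_{X_0})=H^{-1}(X_0,\mathcal O_{X_0})\oplus H^{-2}(X_0,\mathcal O_{X_0})=0$.
\end{itemize}
So $\kappa_X$ and the transport of $\kappa_Y$, which both precompose with $\mathrm{can}$ to $\lambda_{X_1}$, are equal in $\operatorname{Ext}^1$. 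Their fibers are then equivalent, which proves the statement. Your worry is only relevant for animated $A$ with $\pi_1(A/^Lp)$ or $\pi_2(A/^Lp)$ nonzero, and that case is outside the statement.

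With this in hand, the functor $G$ is unnecessary. As written (``modified by the cofiber of $\mathrm{can}$'') it is also not a definition. If you want it anyway, note that all homotopy groups of $\operatorname{Map}(\mathcal O_{X_0}[1]\oplus\mathcal O_{X_0}[2],\mathcal O_{X_0})$ vanish. So $\mathrm{can}^*$ is a monomorphism of mapping spaces, and the factorization of $\lambda$ through $\mathrm{can}$ supplied by Theorem~\ref{thm5.4} is unique up to contractible choice. Its fiber is the correct $G$.

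If you want a genuinely $\infty$-functorial factorization of $F\mathbb L$ through reduction mod $p^2$, the paper's closing remark uses a different mechanism from your sketch of descent plus explicit $\delta$-formulas. It combines \cite[Corollary 2.4]{Sai22}, which says $F\Omega_A$ depends only on $A/p^2$, with animation via \cite[Proposition 5.15]{CS24}.
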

\begin{proof}
    This follows from Corollary \ref{cor5.7}.
\end{proof}
\begin{remark}
    Corollary \ref{cor5.8} suggests that the functor \[F\mathbb L_{(-)}:\operatorname{Ani(Ring)}_{\mathbb Z_{(p)}/}\to \mathcal D_{\ge0}(\mathbb F_p)\] of taking Frobenius--Witt cotangent complexes factors through the functor \[(-)\otimes^L_{\mathbb Z_{(p)}}\mathbb Z/p^2:\operatorname{Ani(Ring)}_{\mathbb Z_{(p)}/}\to \operatorname{Ani(Ring)}_{(\mathbb Z/p^2)/}\] of taking derived modulo $p^2$. This can indeed be proved using a general categorical argument. 

    By \cite[Corollary 2.4]{Sai22}, we have the following commutative diagram:

    \[ \begin{tikzcd} \operatorname{Ring}_{\mathbb Z_{(p)}/} \arrow[r, "F\Omega_{(-)}"] \arrow[d, "(-)\otimes_{\mathbb Z_{(p)}}\mathbb Z/p^2"'] & \operatorname{Mod}(\mathbb F_p) \\ \operatorname{Ring}_{(\mathbb Z/p^2)/}
    \arrow[ur, "F\Omega_{(-)}"']
    \end{tikzcd} \] 
(note that the diagonal functor coincides with a special case of the functor $\Omega_{(-)}^{\mathrm{tot}}$ defined in \cite[Paragraph 2.7]{DKRZB17}). Since the vertical functor preserves the set of compact projective generators and its animation is the functor \[(-)\otimes^L_{\mathbb Z_{(p)}}\mathbb Z/p^2:\operatorname{Ani(Ring)}_{\mathbb Z_{(p)}/}\to \operatorname{Ani(Ring)}_{(\mathbb Z/p^2)/}\] of taking derived modulo $p^2$, \cite[Proposition 5.15]{CS24} implies that the following diagram is commutative:

     \[ \begin{tikzcd} \operatorname{Ani(Ring)}_{\mathbb Z_{(p)}/} \arrow[r, "F\mathbb L_{(-)}"] \arrow[d, "(-)\otimes_{\mathbb Z_{(p)}}^L\mathbb Z/p^2"'] & \mathcal D_{\ge0}(\mathbb F_p) \\ \operatorname{Ani(Ring)}_{(\mathbb Z/p^2)/}
    \arrow[ur, "\mathbb L^{\mathrm{tot}}_{(-)}"']
    \end{tikzcd} \] 
where $\mathbb L^{\mathrm{tot}}_{(-)}$ is the animation of the functor $\Omega^{\mathrm{tot}}_{(-)}$.
    
\end{remark}


\begin{thebibliography}{DKRZB17}
\bibitem[AWZ21]{AWZ21}
P.~Achinger, J.~Witaszek and M.~E.~Zdanowicz, 
\newblock{\em Global Frobenius liftability I}.
\newblock J. Eur. Math. Soc. (JEMS) {\bf 23} (2021), no.~8, 2601--2648; MR4269423.

\bibitem[Bha12]{Bha12}
B.~Bhatt.
\newblock{\em p-adic derived de Rham cohomology}.
\newblock arXiv:1204.6560.
\bibitem[BTLM97]{BTLM97}
A.~Buch, J.F.~Thomsen, N.~Lauritzen, and V.~Mehta.
\newblock{\em The Frobenius morphism on a
toric variety}.
\newblock Tohoku Math. J. (2) 49 (1997), no. 3, 355–366. MR 1464183.

\bibitem[ČS24]{CS24}
K.~Česnavičius, P.~Scholze.
\newblock{\em Purity for flat cohomology}.
\newblock Ann. of Math. (2) 199 (2024), no. 1, 51--180. MR4681144

\bibitem[DI87]{DI87}
 P.~Deligne and L.~Illusie.
 \newblock{\em Relèvements modulo $p^2$ et décomposition du complexe de de Rham}.
 \newblock {Invent. Math. 89 (1987), no. 2, 247–270}.
 
\bibitem[DKRZB17]{DKRZB17}
T.~Dupuy, E.~Katz, J.~Rabinoff, and D.~Zureick-Brown.
\newblock{\em Total $p$-differentials on schemes
over $\mathbb Z/p^2$}.

\bibitem[Ill71]{Ill71}
L.~Illusie.
\newblock{\em Complexe cotangent et d\'eformations. I}. \newblock Lecture Notes in Mathematics, Vol. 239, Springer, Berlin-New York, 1971; MR0491680.
\bibitem[Ill72]{Ill72}
L.~Illusie.
\newblock{\em Complexe cotangent et d\'eformations. II}.
\newblock Lecture Notes in Mathematics, Vol. 283, Springer, Berlin-New York, 1972; MR0491681.

\bibitem[Iye07]{Iye07}
S.~B. Iyengar.
\newblock{\em Andr\'e-Quillen homology of commutative algebras}.
\newblock {\it Interactions between homotopy theory and algebra} (2007), 203--234, Contemp. Math. 436, Amer. Math. Soc. Providence, RI,  MR2355775

\bibitem[Lur17]{Lur17}
J.~Lurie.
\newblock{\em Higher algebra}.
\newblock
{\url{https://people.math.harvard.edu/~lurie/papers/HA.pdf}, sep 2017}
\bibitem[Mao26]{Mao26}
Z.~Mao.
\newblock{Frobenius--Witt cotangent complex for derived rings}.
\newblock  arXiv:2608.04487v1.

\bibitem[Sai22]{Sai22}
T.~Saito.
\newblock{\em Frobenius--Witt differentials and regularity}.
\newblock Algebra Number Theory 16 (2022), no. 2, 369–391. MR4412577
\bibitem[Shi26]{Shi26}
K.~Shimada.
\newblock{\em Frobenius--Witt cotangent complexes}.
\newblock arXiv:2605.14803. 
\bibitem[Sta26]{Sta26}
The Stacks Project Authors.
\newblock{\em The Stacks Project}.
\newblock{\url{https://stacks.math.columbia.edu/}}.
\bibitem[Zda18]{Zda18}
M.~E.~Zdanowicz.
\newblock{\em Liftability of singularities and their Frobenius morphism modulo $p^2$}.
\newblock Int. Math. Res. Not. IMRN {\bf 2018}, no.~14, 4513--4577; MR3830576.
\end{thebibliography}
\end{document}